\def\rn{{\mathbb R^n}}  \def\sn{{\mathbb S^{n-1}}}
\def\co{{\mathcal C_\Omega}}
\newtheorem{thm}{Theorem}
\newtheorem{lem}{Lemma}
\newtheorem*{thmA}{Theorem A}	\newtheorem*{thmB}{Theorem B}	
\newtheorem{rem}{Remark}
\newtheorem{que}{Question}
\newtheorem{cor}{Corollary}
\numberwithin{thm}{section}
\numberwithin{lem}{section}
\numberwithin{equation}{section}
\numberwithin{rem}{section}
\numberwithin{cor}{section}
\begin{document}

\title{\bf\Large On the Bounds of Weak $(1,1)$ Norm of Hardy-Littlewood Maximal Operator with $L\log L(\sn)$ Kernels
\footnotetext{\hspace{-0.35cm} 2020 {\it Mathematics Subject Classification}. Primary 42B20; Secondary 42B25. \endgraf
{\it Key words and phrases}. The maximal operator, $L\log L(\sn)$ rough kernel, upper bound, lower bound, limiting weak-type behaviors.
\endgraf
This project is partially supported by NSFC
(Nos. 11871101,1217011127), 111 Project and the National Key Research and Development Program of China (Grant No. 2020YFA0712900).}}

\date{}

\author{ Moyan Qin, Huoxiong Wu, Qingying Xue \footnote{Corresponding author, E-mail: \texttt{qyxue@bnu.edu.cn}}}

\maketitle

\vspace{-0.8cm}

\begin{center}
\begin{minipage}{13cm}
{\small {\bf Abstract}\quad
Let $\Omega\in L^1{(\sn)}$, be a function of homogeneous of degree zero{, and }$M_\Omega$ be the Hardy-Littlewood maximal operator associated with $\Omega$ defined by $M_\Omega(f)(x) = \sup_{r>0}\frac1{r^n}\int_{|x-y|<r}|\Omega(x-y)f(y)|dy.$ It was shown by  Christ and Rubio de Francia that $\|M_\Omega(f)\|_{L^{1,\infty}(\rn)} \le C(\|\Omega\|_{L\log L(\sn)}+1)\|f\|_{L^1(\rn)}$ provided $\Omega\in L\log L {(\sn)}$.
In this paper, we show that, if $\Omega\in L\log L(\sn)$, then for all $f\in L^1(\rn)$, $M_\Omega$ enjoys the limiting weak-type behaviors that \[\lim_{\lambda\to0^+}\lambda|\{x\in\rn:M_\Omega(f)(x)>\lambda\}| = n^{-1}\|\Omega\|_{L^1(\sn)}\|f\|_{L^1(\rn)}.\]
{This removes the smoothness restrictions on the kernel $\Omega$, such as Dini-type conditions, in previous results. To prove our result, we present a new upper bound of $\|M_\Omega\|_{L^1\to L^{1,\infty}}$, which essentially improves the upper bound $C(\|\Omega\|_{L\log L(\sn)}+1)$ given by  Christ and Rubio de Francia. As a consequence, the upper and lower bounds of $\|M_\Omega\|_{L^1\to L^{1,\infty}}$ are obtained for $\Omega\in L\log L {(\sn)}$.}
}
\end{minipage}
\end{center}

\vspace{0.2cm}


\section{Introduction}\label{s1}


As one of the fundamental operators in modern analysis, the Hardy-Littlewood maximal operator
 \[M(f)(x) = \sup_{r>0}\frac1{r^n}\int_{|x-y|<r}|f(y)|dy,\]
has played very important roles in several fields, such as Harmonic analysis, ergodic theory and index theory.
It was well known that the almost everywhere convergence of some important operators is usually closely related to whether the associated maximal operators satisfy weak type inequalities or not. The Hardy-Littlewood maximal operator and its purpose in differentiation were first introduced by Hardy and Littlewood \cite{HL1930} on $\mathbb R$, and later extended and developed by Wiener \cite{W1939} on $\rn$. It was shown that $M$ is of weak $(1,1)$ type and $L^p$ bounded for $p>1$.

In 1956,  Calder\'on and Zygmund \cite{CZ1956} considered the following rough Hardy-Littlewood maximal operator
 \[M_\Omega(f)(x) = \sup_{r>0}\frac1{r^n}\int_{|x-y|<r}|\Omega(x-y)f(y)|dy,\]
where $\Omega$ is a function of homogeneous of degree zero and $\Omega\in L^1(\rn)$. It was shown in  \cite{CZ1956}  that $M_\Omega$ is bounded on $L^p(\rn)$ for all $p>1$ by using the method of rotation. The weak $(1,1)$ boundedness of $M_\Omega$ was given by  Christ \cite{C1988} when $\Omega\in L^q(\sn)$ for $q>1$. Later on, Christ and  Rubio de Francia \cite{CR1988} extended the kernel condition $\Omega\in L^q(\sn)$ to a more larger space $L\log L(\sn)$ (since $L^q(\sn)\subsetneq L\log L(\sn) \subsetneq L^1(\sn)$).
Their result can be stated as follows:
\begin{thmA}[\cite{CR1988}]\label{thmA}
	For all $n\ge2$ and $\Omega\in L\log L(\sn)$, $M_\Omega$ is of weak type $(1,1)$, and enjoys
	\begin{equation}\label{eq1.1}
		\|M_\Omega(f)\|_{L^{1,\infty}(\rn)} \le C(\|\Omega\|_{L\log L(\sn)}+1)\|f\|_{L^1(\rn)}.
	\end{equation}
\end{thmA}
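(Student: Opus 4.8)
\smallskip
\noindent\emph{Proof proposal.} The plan is to run the Calderón--Zygmund machinery, letting the method of rotations dispose of the ``smooth'' part of $f$ and reserving the genuine work for the cancellation of the ``bad'' part against the rough kernel. As a preliminary, the method of rotations yields a clean $L^p$ bound: passing to polar coordinates $y=t\theta$ and dominating each radial average of $|f|$ by the one--dimensional Hardy--Littlewood maximal function $\mathcal M_\theta$ along the ray $\mathbb R_+\theta$ gives $M_\Omega f(x)\le\int_{\sn}|\Omega(\theta)|\,\mathcal M_\theta f(x)\,d\theta$, whence, by Minkowski's integral inequality, Fubini, and the sharp one--dimensional estimate $\|\mathcal M_\theta\|_{L^p\to L^p}\le p/(p-1)$,
\begin{equation*}
\|M_\Omega\|_{L^p(\rn)\to L^p(\rn)}\;\le\;\frac{p}{p-1}\,\|\Omega\|_{L^1(\sn)},\qquad 1<p\le 2 .
\end{equation*}
By homogeneity it suffices to bound $|\{x\in\rn:M_\Omega f(x)>1\}|$ for a fixed $f\ge0$.

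Next I would run the Calderón--Zygmund decomposition of $f$ at height $1$: $f=g+b$ with $0\le g\lesssim_n 1$, $\|g\|_1\le\|f\|_1$, $b=\sum_Q b_Q$ over pairwise disjoint cubes $Q$, $\int b_Q=0$, $\|b_Q\|_1\lesssim_n|Q|$, and $\sum_Q|Q|\lesssim_n\|f\|_1$. Since $f\ge g\ge0$, one gets $M_\Omega f\le M_\Omega g+T^{*}_\Omega b$, where $T^{*}_\Omega h(x):=\sup_{R>0}\big|R^{-n}\int_{|y|<R}|\Omega(y/|y|)|\,h(x-y)\,dy\big|$; isolating $T^{*}_\Omega$ rather than $M_\Omega$ is important, because its defining functionals are \emph{linear} in $h$, so $T^{*}_\Omega b\le\sum_Q T^{*}_\Omega b_Q$ with each $b_Q$ still of mean zero. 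For the good part I use the $L^p$ bound with $p=1+1/\log\!\big(e+\|\Omega\|_{L^1(\sn)}\big)$ together with $\|g\|_p^p\le\|g\|_\infty^{p-1}\|g\|_1\lesssim_n\|f\|_1$; since $\big(p/(p-1)\big)^p\|\Omega\|_{L^1(\sn)}^p\lesssim\|\Omega\|_{L^1(\sn)}\log\!\big(e+\|\Omega\|_{L^1(\sn)}\big)$ for this choice of $p$, Chebyshev gives $\big|\{M_\Omega g>1/2\}\big|\lesssim_n\|\Omega\|_{L^1(\sn)}\log\!\big(e+\|\Omega\|_{L^1(\sn)}\big)\|f\|_1$, which is $\lesssim_n\big(\|\Omega\|_{L\log L(\sn)}+1\big)\|f\|_1$ by the convexity of $t\mapsto t\log^+t$ and $\|\Omega\|_{L^1(\sn)}\le\|\Omega\|_{L\log L(\sn)}$.

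The bad part $\sum_Q T^{*}_\Omega b_Q$ is the crux, since the usual Calderón--Zygmund trick of extracting off--diagonal decay from kernel smoothness is unavailable when $\Omega$ is only integrable. I would (i) reduce $T^{*}_\Omega b_Q$ to a dyadic--annular maximal function $\sup_k|b_Q*\mu_k|$ (allowing truncated annuli), where $\mu_k$ is the $2^k$--dilate of the fixed compactly supported measure $\mu_0=|\Omega(\cdot/|\cdot|)|\,\mathbf 1_{\{1\le|\cdot|<2\}}$ with total mass $\|\mu_0\|\approx_n\|\Omega\|_{L^1(\sn)}$, the passage from $R$--averages to dyadic annuli costing only a convergent geometric series; (ii) split the kernel dyadically, $\Omega=\sum_{m\ge0}\Omega_m$ with $|\Omega_m|\lesssim2^m$ on its support and $\|\Omega\|_{L\log L(\sn)}\approx\sum_{m\ge0}(m+1)\|\Omega_m\|_{L^1(\sn)}$, writing $\mu_k^{(m)}$ for the corresponding piece of $\mu_k$; and (iii) for a cube $Q$ of sidelength $\approx2^\ell$, split the scales: when $2^k\lesssim2^\ell$ the functions $b_Q*\mu_k^{(m)}$ are supported in a bounded dilate of $Q$, so these terms live on a set of measure $\lesssim\sum_Q|Q|\lesssim\|f\|_1$ and need no size estimate; when $2^k\gg2^\ell$ one must convert $\int b_Q=0$ into a quantitative gain in $k$. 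This last point is where I expect essentially all the difficulty to lie: since $\Omega_m$ has no regularity one cannot differentiate the annular density, so one is forced to control the $L^1$ modulus of continuity of the rough measure $\mu_k^{(m)}$ --- equivalently, to show that translating a thick annulus of radius $2^k$ by the diameter of $Q$ is a genuinely small perturbation --- which must be coupled with the flat bound $\|\Omega_m\|_\infty\lesssim2^m$, with the $L^2$ bound above, and with an almost--orthogonality argument across the scales $k$, so that summing over the $O(m+1)$ ``intermediate'' scales costs exactly one factor $m+1$, precisely the amount that $\Omega\in L\log L(\sn)$ can pay. Summing over $Q$, over $m$ against the weight $m+1$, and invoking $\sum_Q|Q|\lesssim\|f\|_1$, gives $\big|\{T^{*}_\Omega b>1/2\}\big|\lesssim_n\big(\|\Omega\|_{L\log L(\sn)}+1\big)\|f\|_1$, which together with the good--part bound yields \eqref{eq1.1}.
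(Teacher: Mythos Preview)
Your outline is structurally on track---Calder\'on--Zygmund decomposition, a dyadic level-set splitting $\Omega=\sum_m\Omega_m$ with $\|\Omega_m\|_\infty\lesssim 2^m$, and the observation that the $L\log L$ norm is exactly $\sum_m(m+1)\|\Omega_m\|_1$---but the bad-part paragraph contains a genuine gap, and the mechanism you name for closing it is not the one that works.

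You propose to extract the gain in $k-\ell$ from ``the $L^1$ modulus of continuity of the rough measure $\mu_k^{(m)}$'', i.e.\ from $\int|\mu_k^{(m)}(y)-\mu_k^{(m)}(y-h)|\,dy$ for $|h|\lesssim 2^\ell$. This is precisely the H\"ormander-condition route, and it fails here: a bounded homogeneous $\Omega_m$ need have no regularity whatsoever, so that modulus of continuity does not decay. What Christ--Rubio de Francia actually do (and what the paper reproduces in its proof of Theorem~\ref{thm1}) is an $L^2$ duality trick that bypasses kernel regularity entirely. One writes
\[
\|K_j^s\ast B_{j-s}\|_{L^2}^2=\int \big(\widetilde K_j^s\ast K_j^s\ast B_{j-s}\big)\,B_{j-s},
\]
and the point is that the self-convolution $\widetilde K_0^s\ast K_0^s(x)=\int_{\sn}L^x(\Omega_s)(\theta)\,\Omega_s(\theta)\,d\sigma(\theta)$ has the structure of an integral of \emph{line integrals} $L^x$ in directions $\theta$. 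The decay $2^{-\gamma s}$ then comes from Lemma~\ref{lem21}: the interaction of a fixed line integral with the mean-zero pieces $b_Q$ of sidelength $2^{-s}$, after integration in $x$, gains a power of $2^{-s}$ for purely geometric reasons (a line meets few small cubes). This is not an almost-orthogonality-in-$k$ statement and not a kernel-smoothness statement; it is a geometric estimate for a single $s$ that uses only $\|\Omega_s\|_\infty$. Without this lemma (or an equivalent), your step (iii) does not close, and the phrase ``almost-orthogonality across the scales $k$'' does not supply it.

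A secondary remark on the good part: your optimized-$p$ argument is correct and gives $\|\Omega\|_{L^1}\log(e+\|\Omega\|_{L^1})\lesssim\|\Omega\|_{L\log L}+1$, but the paper's treatment is simpler and sharper. It performs the C--Z decomposition at height $c\lambda/\mathcal C_\Omega$ (rather than at height $1$) and then uses only $\|g\|_\infty$ together with $\|\Omega_j\|_{L^1}\lesssim\|\Omega\|_{L^1(\sn)}$ to get $\sup_j|\Omega_j\ast g|\le\lambda/2$ pointwise, so the good-part level set is empty. This avoids the $L^p$ interpolation entirely and is what allows the refined constant $\mathcal C_\Omega$ in Theorem~\ref{thm1}.
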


\begin{rem} It is worthy pointing that, similar result like (\ref{eq1.1}) also holds for the singular integral $T_\Omega$ with rough homogeneous kernels \cite{CZ1956}, and the constant on the right side is also $C(\|\Omega\|_{L\log L(\sn)}+1)$.
\end{rem}
Now let's turn to the best constants problem for the weak norm inequalities of some important operators in Harmonic analysis. This belongs to less fine problems and has attracted lots of attentions. For example, in one deminsion $\mathbb R$, {Davis \cite{D1974} obtained the best constant of weak-type $(1,1)$ for Hilbert transform; Melas \cite{M2003} proved that $\|M\|_{L^1\to L^{1,\infty}}=\frac{11+\sqrt{61}}{11}$ for the Hardy-Littlewood maximal operator $M$; Grafakos and Montgomery-Smith \cite{GM1997} showed that the $(p,p)$ norm of the uncentered Hardy-Littlewood maximal operator is the unique positive solution of the equation $(p-1)x^p-px^{p-1}=1$; Grafakos and Kinnunen \cite{GK1998} pointed out that the upper bound of the weak $(1,1)$ norm  is $2$ for uncentered Hardy-Littlewood maximal operator, and the results in \cite{GK1998} can be extended to more general measure space. However, when the dimension $n$ is bigger than $1$, things become more subtle. Even for the well-known Riesz transform, there is no such information.

To explore the lower bounds of $\|M\|_{L^1\to L^{1,\infty}}$, in 2006, Janakiraman \cite{J2006} considered the following limiting weak-type behavior for the Hardy-Littlewood maximal operator $M$:
\begin{equation}\lim_{\lambda\to0^+}\lambda|\{x\in\rn:M(f)(x)>\lambda\}| = \|f\|_{L^1(\rn)},\end{equation}
which yields the lower bound in the way that
\begin{equation}\label{eq1.2}
	\|M\|_{L^1(\rn)\to L^{1,\infty}(\rn)} \ge 1.
\end{equation}

Although (\ref{eq1.2}) can be concluded from a geometric point of view by simply taking $f=\chi_{B(0,1)}$, the limiting weak-type behavior still offers us a new viewpoint in obtaining the lower bounds of the best constants.

In 2017, Ding and Lai \cite{DL2017-2} extended Janakiraman's results to $M_\Omega$ with the kernel $\Omega$ satisfies the $L^1$-Dini condition{, see also \cite{GHW} for more general limiting weak-type behaviors}. Recall that,  $\Omega\in L^1(\sn)$ is said to satisfy the the $L^1$-Dini condition if
{\[\int_0^1\frac{\omega(\delta)}\delta d\delta<\infty,\]
where $\omega(\delta):=\sup_{\|\rho\|\le\delta}\int_\sn|\Omega(\rho\theta)-\Omega(\theta)|d\sigma(\theta)$ for $\delta>0$, here $\rho$ denotes a rotation on $\rn$ and $\|\rho\|:=\sup\{|\rho \theta-\theta|:\,\theta\in\sn\}$.}

\begin{thmB}[\cite{DL2017-2}]\label{thmb1}
	Let $\Omega$ satisfy the $L^1$-Dini condition and $f\in L^1(\rn)$. Then
	\[\lim_{\lambda\to0^+}\lambda|\{x\in\rn:M_\Omega(f)(x)>\lambda\}| = n^{-1}\|\Omega\|_{L^1(\sn)}\|f\|_{L^1(\rn)}.\]
\end{thmB}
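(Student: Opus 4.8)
The plan is to reduce the statement to a convenient dense subclass, pin down the precise asymptotics of $M_\Omega g$ at spatial infinity, and then evaluate the level‑set measures in polar coordinates. Since $M_\Omega f=M_\Omega|f|$ we may assume $f\ge0$. A routine splitting $f=g+h$, with $g$ bounded, supported in a ball $B(0,R)$, and $\|h\|_{L^1(\rn)}$ small, together with the sublinearity of $M_\Omega$ and the weak $(1,1)$ boundedness of $M_\Omega$ (classical under the $L^1$-Dini condition), shows that it suffices to prove the identity for such a $g$. For $g$ as above and $|x|>R$, the ball $B(x,r)$ is disjoint from $B(0,R)$ when $r\le|x|-R$ and contains it when $r\ge|x|+R$, while $\int_{|x-y|<r}|\Omega(x-y)|g(y)\,dy\le F(x):=\int_{\rn}|\Omega(x-y)|g(y)\,dy<\infty$ for every $r$; hence
\[\frac{F(x)}{(|x|+R)^n}\ \le\ M_\Omega g(x)\ \le\ \frac{F(x)}{(|x|-R)^n}\qquad(|x|>R).\]
Since $\{|x|\le R\}$ contributes at most $|B(0,R)|$ to any level set, which is annihilated by the factor $\lambda$, the problem reduces to computing $\lim_{\lambda\to0^+}\lambda\,|\{x:F(x)>\lambda(|x|\pm R)^n\}|$ for both choices of sign.

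The key analytic input is the behaviour of $F$ at infinity. Writing $x=\rho\theta$ with $\theta\in\sn$ and using that $\Omega$ is homogeneous of degree zero, $\Omega(\rho\theta-y)=\Omega(\theta-y/\rho)$, one obtains for $\rho\ge2R$
\[\int_\sn\big|F(\rho\theta)-\|g\|_{L^1(\rn)}\,|\Omega(\theta)|\big|\,d\theta\ \le\ \|g\|_{L^1(\rn)}\,\omega_1(R/\rho),\qquad \omega_1(\epsilon):=\sup_{|\zeta|\le\epsilon}\int_\sn\big|\Omega(\widehat{\theta-\zeta})-\Omega(\theta)\big|\,d\theta,\]
where $\widehat{\,\cdot\,}$ denotes radial projection onto $\sn$. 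Since $\theta\mapsto\widehat{\theta-\zeta}$ is a near‑rotation, within $O(|\zeta|)$ of an actual rotation by an angle comparable to $|\zeta|$, a standard comparison gives $\omega_1(\epsilon)\lesssim\omega(C\epsilon)$, so the $L^1$-Dini hypothesis yields $\int_0^1\omega_1(\epsilon)\,\epsilon^{-1}\,d\epsilon<\infty$; in particular $F(\rho\,\cdot)\to\|g\|_{L^1(\rn)}\,|\Omega(\cdot)|$ in $L^1(\sn)$ as $\rho\to\infty$, with $\sup_{\rho\ge2R}\|F(\rho\,\cdot)\|_{L^1(\sn)}<\infty$.

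Passing to polar coordinates and scaling $\rho=\lambda^{-1/n}s$ one gets
\[\lambda\,\big|\{x:F(x)>\lambda(|x|\pm R)^n,\ |x|>R\}\big|\ =\ \int_{\lambda^{1/n}R}^{\infty}s^{n-1}\Big(\int_\sn\mathbf{1}\big[F(\lambda^{-1/n}s\,\theta)>(s\pm\lambda^{1/n}R)^n\big]\,d\theta\Big)\,ds.\]
For $s$ outside a countable set (the set of $s$ with $|\{\theta:\|g\|_{L^1(\rn)}|\Omega(\theta)|=s^n\}|>0$) the inner integral converges, as $\lambda\to0^+$, to $|\{\theta:\|g\|_{L^1(\rn)}|\Omega(\theta)|>s^n\}|$, by the $L^1(\sn)$-convergence above (convergence in measure on $\sn$). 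On every interval $(0,A]$ one dominates by $|\sn|\,s^{n-1}$ and applies dominated convergence; since $\int_\sn\int_0^{(\|g\|_{L^1(\rn)}|\Omega(\theta)|)^{1/n}}s^{n-1}\,ds\,d\theta=n^{-1}\|g\|_{L^1(\rn)}\|\Omega\|_{L^1(\sn)}$, combining this with the sandwich of the first paragraph yields $\lim_{\lambda\to0^+}\lambda|\{M_\Omega g>\lambda\}|=n^{-1}\|g\|_{L^1(\rn)}\|\Omega\|_{L^1(\sn)}$, and undoing the initial reduction finishes the proof.

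I expect the main obstacle to be the uniform control, as $A\to\infty$, of the tail $\int_A^\infty(\cdots)\,ds$ in the last display, that is, ruling out mass escaping to spatial infinity. Writing $F=\|g\|_{L^1(\rn)}|\Omega|+(F-\|g\|_{L^1(\rn)}|\Omega|)$, the first summand contributes only the tail of the convergent integral $\int_0^\infty|\{\theta:\|g\|_{L^1(\rn)}|\Omega(\theta)|>w\}|\,dw=\|g\|_{L^1(\rn)}\|\Omega\|_{L^1(\sn)}$, which is harmless; for the error term, Chebyshev's inequality together with the estimate of the previous paragraph for $\int_\sn\big|F(\rho\theta)-\|g\|_{L^1(\rn)}|\Omega(\theta)|\big|\,d\theta$ reduces matters, after the substitution $s=\lambda^{1/n}\rho$, to the finiteness of $\int_0^1\omega_1(\epsilon)\,\epsilon^{-1}\,d\epsilon$ — which is exactly the $L^1$-Dini condition. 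This is precisely where the smoothness hypothesis in Theorem B is genuinely needed: the crude form of this tail estimate diverges when $\Omega$ is merely in $L\log L(\sn)$, and overcoming that obstruction is the role of the refined upper bound for $\|M_\Omega\|_{L^1\to L^{1,\infty}}$ established in the present paper.
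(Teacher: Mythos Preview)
The paper does not prove Theorem~B itself; it is quoted from \cite{DL2017-2} and then superseded by Theorem~\ref{thm2}, which the paper establishes under the weaker hypothesis $\Omega\in L\log L(\sn)$. Your argument is essentially correct and is close in spirit to the original Ding--Lai approach, but it is genuinely different from the proof the paper gives for its generalization, Theorem~\ref{thm2}(i).

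The paper never touches the Dini modulus. Instead it (a) truncates $f$ to a ball, (b) approximates $\Omega$ in $L\log L(\sn)$ by a \emph{continuous} $\Omega_\varepsilon$, (c) uses uniform continuity of $\Omega_\varepsilon$ to obtain \emph{pointwise} two-sided bounds $\frac{(1-\varepsilon)(\Omega_\varepsilon(x)-\varepsilon)}{(1+\varepsilon)^n|x|^n}\le M_{\Omega_\varepsilon}(f\chi_{B(0,r_\varepsilon)})(x)\le\frac{\Omega_\varepsilon(x)+\varepsilon}{(1-\varepsilon)^n|x|^n}$ for $|x|$ large, and (d) reads off the level-set measure exactly via Lemma~\ref{lem22}. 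The cost of step~(b) is that the discarded piece $M_{\Omega-\Omega_\varepsilon}$ must be controlled by a weak $(1,1)$ bound whose constant tends to $0$ with $\|\Omega-\Omega_\varepsilon\|_{L\log L(\sn)}$; this is precisely Theorem~\ref{thm1}, and it is the new ingredient that makes the $L\log L$ case work. Your route keeps $\Omega$ intact, replaces the pointwise sandwich by an $L^1(\sn)$-convergence statement for $F(\rho\,\cdot)$, and absorbs the resulting tail into $\int_0^1\omega_1(\epsilon)\,\epsilon^{-1}\,d\epsilon<\infty$, exactly as you diagnose in your last paragraph. Your method is therefore more self-contained for the Dini case (it needs only the crude weak $(1,1)$ bound of Theorem~A, not Theorem~\ref{thm1}), but, as you note, the tail estimate breaks down for $\Omega\in L\log L(\sn)$, which is where the paper's detour through continuous approximation and the refined constant $\co$ earns its keep.
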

As we mentioned before, {in Theorem A,} $\Omega\in L\log L(\sn)$ is a sufficient condition to guarantee the weak-type $(1,1)$ boundedness of $M_\Omega$. 
{Note that
\[L^1-\text{Dini}\subsetneq L\log L(\sn) \subsetneq L^1(\sn),\]
and the result of Ding and Lai \cite{DL2017-2} {was} obtained under $L^1$-Dini condition.
It is natural to ask the following question:
  \begin{que}
  Does $M_\Omega$ still enjoy the limiting weak-type behavior when $\Omega\in L\log L(\sn)$?
  \end{que}

One of the main purpose in this paper is to address the question above. Our first result can be formulated as follows.}




\begin{thm}\label{thm2}
Let $\Omega\in L\log L(\sn)$. Then for all $f\in L^1(\rn)$, it holds that
\begin{enumerate}
	\item[\rm{(i)}] $\displaystyle\lim_{\lambda\to0^+}\lambda|\{x\in\rn:M_\Omega(f)(x)>\lambda\}| = n^{-1}\|\Omega\|_{L^1(\sn)}\|f\|_{L^1(\rn)};$
\item[\rm{(ii)}] $\displaystyle\lim_{\lambda\to0^+}\lambda\big|\big\{x\in\rn:\big|M_\Omega(f)(x) - \|f\|_{L^1(\rn)}|\Omega(x)||x|^{-n}\big|>\lambda\big\}\big|=0.$
\end{enumerate}
\end{thm}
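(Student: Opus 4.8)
\medskip
\noindent\emph{Proof strategy.}\quad The plan is to prove (ii) first and deduce (i) from it; throughout we may assume $f\ge 0$, since $M_\Omega f=M_\Omega|f|$. Write $F(x):=|\Omega(x)||x|^{-n}$ for $x\ne 0$; passing to polar coordinates gives the exact identity $|\{F>\mu\}|=(n\mu)^{-1}\|\Omega\|_{L^1(\sn)}$ for every $\mu>0$. Granting (ii), the inclusions $\{M_\Omega f>\lambda\}\subseteq\{\|f\|_1F>(1-\varepsilon)\lambda\}\cup\{|M_\Omega f-\|f\|_1F|>\varepsilon\lambda\}$ and $\{\|f\|_1F>\lambda\}\subseteq\{M_\Omega f>(1-\varepsilon)\lambda\}\cup\{|M_\Omega f-\|f\|_1F|\ge\varepsilon\lambda\}$ (valid for $0<\varepsilon<1$), after multiplying by $\lambda$, inserting the exact value above, and letting $\varepsilon\to0^+$, yield (i). Next, a perturbation argument based on Theorem A reduces (ii) to $f$ bounded with compact support: writing $f=g+h$ with $g$ of that type and $\|h\|_{L^1}<\varepsilon$, the two error functions $M_\Omega f-\|f\|_1F$ and $M_\Omega g-\|g\|_1F$ differ by at most $M_\Omega h+\varepsilon F$, and $\lambda|\{M_\Omega h>\lambda\}|\le C(\|\Omega\|_{L\log L(\sn)}+1)\varepsilon$ together with $\lambda|\{\varepsilon F>\lambda\}|=\varepsilon n^{-1}\|\Omega\|_{L^1(\sn)}$ makes the corresponding quantities agree up to $O(\varepsilon)$ as $\lambda\to0^+$.

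\medskip
So fix $f\ge 0$ bounded with $\mathrm{supp}\,f\subseteq B(0,R)$ and set $\psi:=|\Omega|$, extended to $\rn\setminus\{0\}$ homogeneously of degree $0$. For $|x|>2R$ the ball $B(x,r)$ meets $\mathrm{supp}\,f$ only when $r>|x|-R$; for such $r$ the average inside $M_\Omega f$ is at most $r^{-n}(\psi*f)(x)$, with equality once $r\ge|x|+R$, where $(\psi*f)(x):=\int\psi(x-z)f(z)\,dz$. Since $r\mapsto r^{-n}$ decreases, this gives
\[
\frac{(\psi*f)(x)}{(|x|+R)^n}\ \le\ M_\Omega f(x)\ \le\ \frac{(\psi*f)(x)}{(|x|-R)^n}\qquad(|x|>2R).
\]
Because $|\{|x|\le 2R\}|<\infty$, the ball $\{|x|\le 2R\}$ contributes nothing to $\lim_{\lambda\to0^+}\lambda|\{\cdot\}|$; and $(|x|\mp R)^{-n}=|x|^{-n}+O(R|x|^{-n-1})$ on $|x|>2R$. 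Hence (ii) reduces to showing that $\lambda\bigl|\{|x|>2R:\ R|x|^{-n-1}(\psi*f)(x)>\lambda\}\bigr|\to0$ and $\lambda\bigl|\{|x|>2R:\ |x|^{-n}|(\psi*f)(x)-\|f\|_1\psi(x)|>\lambda\}\bigr|\to0$ as $\lambda\to0^+$.

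\medskip
Both rest on the bound $(\psi*f)(x)\le C_f\,\psi^{*}(x/|x|)$ for $|x|>2R$, where $\psi^{*}$ is the Hardy--Littlewood maximal function of $\psi$ on $\sn$: indeed $(\psi*f)(x)\le\|f\|_\infty\int_{B(x,R)}\psi$, and $B(x,R)$ lies, in polar coordinates, in $[\,|x|-R,|x|+R\,]$ times a spherical cap about $x/|x|$ of radius $\lesssim R/|x|$, so $\int_{B(x,R)}\psi\lesssim R|x|^{n-1}(R/|x|)^{n-1}\psi^{*}(x/|x|)=R^{n}\psi^{*}(x/|x|)$. Here the hypothesis $\Omega\in L\log L(\sn)$ is decisive: the classical $L\log L$ estimate for the maximal operator on $\sn$ gives $\psi^{*}\in L^{1}(\sn)$, hence $\psi^{*}\in L^{p}(\sn)$ for every $0<p<1$ since $\sigma(\sn)<\infty$. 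For the first statement, the relevant set lies inside $\{|x|^{n+1}<C_f'\psi^{*}(x/|x|)/\lambda\}$, of measure $n^{-1}(C_f')^{n/(n+1)}\lambda^{-n/(n+1)}\|\psi^{*}\|_{L^{n/(n+1)}(\sn)}^{n/(n+1)}$; multiplying by $\lambda$ leaves a factor $\lambda^{1/(n+1)}\to0$. For the second, fix $\varepsilon>0$, choose $\psi_0\ge0$ continuous on $\sn$ with $\|\psi-\psi_0\|_{L\log L(\sn)}<\varepsilon$ (continuous functions being dense in $L\log L(\sn)$), put $g:=|\psi-\psi_0|$, and decompose
\[
(\psi*f)(x)-\|f\|_1\psi(x)=\bigl[(\psi_0*f)(x)-\|f\|_1\psi_0(x)\bigr]+\bigl[(\psi-\psi_0)*f\bigr](x)+\|f\|_1\bigl[\psi(x)-\psi_0(x)\bigr].
\]
By uniform continuity of $\psi_0$ and $\bigl|(x-z)/|x-z|-x/|x|\bigr|\lesssim R/|x|$ for $z\in B(0,R)$, the first bracket is bounded by $\|f\|_1\,\eta(|x|)$ with $\eta(\rho)\to0$ as $\rho\to\infty$, so its contribution vanishes (split $\rn$ at a large radius); the second bracket is $\le C_f\,g^{*}(x/|x|)$ with $\|g^{*}\|_{L^1(\sn)}$ small by the $L\log L(\sn)\to L^1(\sn)$ bound for the maximal operator; the third bracket is $\|f\|_1\,g(x/|x|)$ with $\|g\|_{L^1(\sn)}<\varepsilon$. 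For the last two the distribution-function identity used for $F$ gives $\lambda|\{\cdot>\lambda\}|\le C_f\varepsilon$ for all small $\lambda$; letting $\varepsilon\to0^+$ completes (ii), and then (i).

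\medskip
The main obstacle is precisely the roughness of $\Omega$: unlike the Dini setting of Theorem B, one cannot compare $\psi(x-z)$ with $\psi(x)$ pointwise for large $|x|$, so $M_\Omega f$ must be matched with $\|f\|_1F$ only in measure. The exceptional set is governed by the spherical maximal function $\psi^{*}$, whose membership in $L^1(\sn)$ is exactly equivalent to $\Omega\in L\log L(\sn)$; together with $L^1(\sn)\subset L^p(\sn)$ for $p<1$, this is what converts the geometric error $O(R/|x|)$ coming from enlarging the averaging ball into a bound decaying like $\lambda^{1/(n+1)}$. An alternative, closer to the organization suggested by the abstract, is to first establish the sharpened weak-type bound replacing the $+1$ in (\ref{eq1.1}) by a term scaling with $\|\Omega\|$, and then deduce Theorem~\ref{thm2} from Theorem B by approximating $\Omega$ in $L\log L(\sn)$ by Lipschitz kernels; the route above needs only Theorem A.
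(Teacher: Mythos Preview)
Your argument is essentially correct and follows a genuinely different route from the paper's. The paper proves (i) and (ii) separately, and in both cases its engine is Theorem~\ref{thm1}: after approximating $\Omega$ by a continuous $\Omega_\varepsilon$, the contribution of $M_{\Omega-\Omega_\varepsilon}$ is controlled by $\mathcal C_{\Omega-\Omega_\varepsilon}\lesssim \varepsilon^{3/4}$, which is precisely why the refinement of the constant in Theorem~A is needed. You instead reduce everything, for bounded compactly supported $f$, to the pointwise sandwich $(\psi*f)(x)/(|x|\pm R)^n$ and the geometric bound $(\psi*f)(x)\lesssim_f\psi^*(x/|x|)$ via the spherical Hardy--Littlewood maximal function; the hypothesis $\Omega\in L\log L(\sn)$ then enters only through Stein's theorem $\psi^*\in L^1(\sn)$. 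This is more elementary in that it bypasses the Calder\'on--Zygmund machinery behind Theorem~\ref{thm1}; the paper's route, on the other hand, yields the quantitative weak-type bound $\|M_\Omega\|_{L^1\to L^{1,\infty}}\le C\mathcal C_\Omega$ as an independent result.

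One point deserves a more careful justification. When you write that ``$\|g^*\|_{L^1(\sn)}$ is small by the $L\log L(\sn)\to L^1(\sn)$ bound for the maximal operator'', note that the textbook form of that bound is $\|g^*\|_{L^1}\le C\int|g|\log^+|g|\,d\sigma + C$, and the additive constant does \emph{not} vanish as $g\to 0$; this is exactly the phenomenon the paper flags in Remark~1.3 and Question~2. What you actually need is the sharper estimate $\|g^*\|_{L^1(\sn)}\to 0$ as $\|g\|_{L\log L(\sn)}\to 0$, which does hold: from $\sigma(\{g^*>t\})\le Ct^{-1}\int_{\{|g|>t/2\}}|g|$ one gets $\int g^*\le A\sigma(\sn)+C\int_{\{|g|>A/2\}}|g|\log(2|g|/A)$, and optimizing in $A$ (e.g.\ $A=\|g\|_{L\log L}^{1/2}$) gives $\|g^*\|_{L^1}\lesssim \|g\|_{L\log L}^{1/2}+\|g\|_{L\log L}\log(1/\|g\|_{L\log L})$. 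With this in hand your argument goes through. A minor omission: in passing from the sandwich to the two displayed limits you drop the third error term $R\|f\|_1\psi(x)|x|^{-n-1}$; since $\psi\le\psi^*$ pointwise, it is absorbed into your first term and handled identically.
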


\begin{rem}
	Note that, in Theorem \ref{thm2}, we removed the smoothness condition and only assumed the kernel $\Omega \in L\log L(\sn)$. Therefore, Theorem \ref{thm2} essentially improved the result in Theorem {B}.
\end{rem}

{Furthermore, in order to show Theorem \ref{thm2}, it requires that the upper bound of $\|M_\Omega\|_{L^1\to L^{1,\infty}}$ tends to $0$ as $\|\Omega\|_{L\log L(\sn)} \to 0$. However, it is clear that the upper bound in (\ref{eq1.1}) of Theorem A does not satisfies this requirement. This naturally  leads to the following question.

\begin{que}
Is it possible to give an improvement of the upper bound $\|\Omega\|_{L\log L(\sn)}+1$ in the inequality (\ref{eq1.1}) ?
\end{que}

This question will be affirmed by our next theorem.}


\begin{thm}\label{thm1}
	Suppose $\Omega\in L\log L(\sn)$. Then $M_\Omega$ is of weak type $(1,1)$, and enjoys
	\[\|M_\Omega\|_{L^{1,\infty}(\rn)} \le C\co\|f\|_{L^1(\rn)},\]
		where
	\[\co = \|\Omega\|_{L\log L(\sn)} + \int_\sn|\Omega(\theta)|\Big(1+\log^+\frac{|\Omega(\theta)|}{\|\Omega\|_{L^1(\sn)}}\Big)d\sigma(\theta).\]
\end{thm}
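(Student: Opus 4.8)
The plan is to decompose the kernel $\Omega$ dyadically according to its size and to track the dependence of the weak-$(1,1)$ bound on each piece, summing up with care. Write $E_0 = \{\theta\in\sn : |\Omega(\theta)| \le \|\Omega\|_{L^1(\sn)}\}$ and, for $j\ge 1$, $E_j = \{\theta\in\sn : 2^{j-1}\|\Omega\|_{L^1(\sn)} < |\Omega(\theta)| \le 2^j\|\Omega\|_{L^1(\sn)}\}$, and set $\Omega_j = \Omega\chi_{E_j}$, so that $\Omega = \sum_{j\ge 0}\Omega_j$. The point of normalizing by $\|\Omega\|_{L^1(\sn)}$ is that the quantity $\co$ is exactly $\|\Omega\|_{L^1(\sn)}$ plus two copies (up to constants) of $\sum_{j\ge1} j\,\|\Omega_j\|_{L^1(\sn)}$, so any bound of the form $\sum_{j\ge 0}(c+cj)\|\Omega_j\|_{L^1(\sn)}$ for the weak-$(1,1)$ norm of $M_{\Omega_j}$ will telescope into $C\co$.

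The key analytic input is a quantitative version of the Christ--Rubio de Francia argument giving, for a bounded kernel, a weak-$(1,1)$ bound that is \emph{linear} in the $L^1$ norm and depends only \emph{logarithmically} on the ratio $\|\Omega_j\|_{L^\infty}/\|\Omega_j\|_{L^1}$. Concretely I expect to prove that for a nonnegative $b\in L^\infty(\sn)$,
\[
\|M_b\|_{L^1\to L^{1,\infty}} \le C\Big(1 + \log^+\frac{\|b\|_{L^\infty(\sn)}}{\|b\|_{L^1(\sn)}}\Big)\|b\|_{L^1(\sn)}.
\]
Applying this with $b = \Omega_j$, where $\|\Omega_j\|_{L^\infty}/\|\Omega_j\|_{L^1} \lesssim 2^j\|\Omega\|_{L^1(\sn)}/\|\Omega_j\|_{L^1(\sn)}$, gives a bound like $C(1+j+\log^+(\|\Omega\|_{L^1}/\|\Omega_j\|_{L^1}))\|\Omega_j\|_{L^1(\sn)}$; the last logarithm is handled by the elementary inequality $t\log^+(1/t) \le e^{-1}$ for $t = \|\Omega_j\|_{L^1}/\|\Omega\|_{L^1} \le 1$, which contributes only a multiple of $\|\Omega\|_{L^1(\sn)}$ in total after summing in $j$ (there being essentially no gain needed since $\sum_j \|\Omega_j\|_{L^1} = \|\Omega\|_{L^1}$). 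Then by sublinearity $M_\Omega(f) \le \sum_j M_{\Omega_j}(f)$ pointwise, and the weak-$(1,1)$ quasi-norm of a sum is controlled (with a harmless constant, or via the standard $\sum \lambda|\{M_{\Omega_j}f > \lambda 2^{-j-2}\}|$ splitting, or more cleanly by observing that each $M_{\Omega_j}$ already maps into $L^{1,\infty}$ with summable-in-$j$ norms) by $\sum_j \|M_{\Omega_j}\|_{L^1\to L^{1,\infty}}\|f\|_{L^1} \le C\co\|f\|_{L^1}$.

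For the boundedness estimate on a single $L^\infty$ piece $b$, I would follow the linearization/TT$^*$ or microlocal decomposition used by Christ and Rubio de Francia: split the supremum over radii into dyadic scales, write $M_b f \le \sum_k A_k f$ with $A_k$ an averaging-type operator at scale $2^k$ against the kernel $b$, and exploit almost-orthogonality together with the $L^2$ bound $\|M_b\|_{L^2\to L^2}\lesssim \|b\|_{L^1}$ (which follows from Calder\'on--Zygmund rotation) and an $L^1$ Calder\'on--Zygmund decomposition of $f$ at height $\lambda$. The number of scales one must sum ``by hand'' before orthogonality takes over is proportional to $\log(\|b\|_{L^\infty}/\|b\|_{L^1})$, which is precisely where the logarithmic factor enters; the rest of the scales are summed using the $L^2$ decay. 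The main obstacle, and the part requiring the most care, is making this scale-counting genuinely quantitative: one must verify that no hidden dependence on $b$ beyond the advertised $\log^+(\|b\|_\infty/\|b\|_1)$ and the linear $\|b\|_1$ creeps in through the Calder\'on--Zygmund constants or the almost-orthogonality sums, and in particular that the estimate is scale-invariant in the right way so that the dyadic kernel decomposition in $j$ really does reassemble into $\co$ rather than into something larger like $\|\Omega\|_{L\log L}^2$ or $\|\Omega\|_{L\log L}\log\|\Omega\|_{L\log L}$.
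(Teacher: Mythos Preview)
Your key analytic input --- the quantitative bound $\|M_b\|_{L^1\to L^{1,\infty}} \le C\big(1+\log^+(\|b\|_{L^\infty}/\|b\|_{L^1})\big)\|b\|_{L^1}$ for bounded kernels --- is correct and can indeed be extracted from the Christ--Rubio de Francia argument with careful bookkeeping. The genuine gap is in the reassembly step: $L^{1,\infty}$ is not normable, and none of your three suggested devices for summing the pieces works. The ``harmless constant'' and ``summable-in-$j$ norms'' options both presuppose a triangle inequality that fails for infinite sums in $L^{1,\infty}$. The level-splitting $\{M_\Omega f>\lambda\}\subset\bigcup_j\{M_{\Omega_j}f>c_j\lambda\}$ costs a factor $1/c_j$ on the $j$-th term; with $c_j=2^{-j-2}$ you would need $\sum_j 2^j(1+j)\|\Omega_j\|_{L^1}<\infty$, which for $\|\Omega_j\|_{L^1}\sim 2^j\|\Omega\|_{L^1}\sigma(E_j)$ is a condition of $L^2$-type, far stronger than $L\log L$. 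Choosing instead $c_j$ proportional to $(1+j)\|\Omega_j\|_{L^1}$ makes every nonzero term contribute a full $\co\|f\|_{L^1}$, and the sum over $j$ diverges whenever $\Omega$ is unbounded. No clever choice of $(c_j)$ rescues this: the Stein--Weiss addition lemma in $L^{1,\infty}$ carries an unavoidable $\log N$ factor in the number $N$ of summands.

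The paper sidesteps this by \emph{not} decomposing $\Omega$ up front. It performs the Calder\'on--Zygmund decomposition of $f$ first, at height $c_n\lambda/\co$ (this is where $\co$ enters the argument), disposes of the good part trivially, and only then --- inside the treatment of the bad part $b=\sum_s B_s$ --- splits the kernel at each scale $s$ into a large piece $\Omega\chi_{\{\Omega\ge 2^{\gamma s/3}\|\Omega\|_{L^1}\}}$ handled by a crude $L^1$ estimate and a bounded remainder handled by the Christ--Rubio de Francia $L^2$ lemma. These sums live in $L^1$ and $L^2$, where the triangle inequality is available; the $L^1$ sum over $s$ produces exactly the $\int_{\sn}|\Omega|\big(1+\log^+(|\Omega|/\|\Omega\|_{L^1})\big)\,d\sigma$ contribution, and the $L^2$ sum is geometric in $s$ and gives $\|\Omega\|_{L^1}$. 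Your intermediate estimate for bounded kernels is implicitly contained in this argument (apply it with $\Omega$ bounded and note the large-piece set is empty for $s$ beyond $\sim\log(\|\Omega\|_\infty/\|\Omega\|_1)$), but using it as a black box and summing afterward in $L^{1,\infty}$ loses the game.
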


\begin{rem}
Note that $\co \le 3(\|\Omega\|_{L\log L(\sn)}+1)$. Hence, Theorem \ref{thm1} is an essential improvement of Theorem A. Moreover, $\co$ has a nice property that it tends to $0$ as $\|\Omega\|_{L\log L(\sn)}$ tends to $0$. This improvement plays an important role in proving the limiting weak-type behaviors of $M_\Omega$ in Theorem \ref{thm2}{, and it has its own interest in bounding the range of $\|M_\Omega\|_{L^1\to L^{1,\infty}}$.}
\end{rem}

As a consequence of Theorem \ref{thm2} and Theorem \ref{thm1}, we {obtain} the upper and lower bound of the weak norm of $M_\Omega$.

\begin{cor}
Let $\Omega\in L\log L(\sn)$. Then it holds that
\[n^{-1}\|\Omega\|_{L^1(\sn)} \le \|M_\Omega\|_{L^1(\rn)\to L^{1,\infty}(\rn)} \le C\co.\]
\end{cor}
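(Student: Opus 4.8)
The plan is to read off both inequalities as immediate consequences of the two main theorems, so the real content is already done; what remains is only to verify that the limiting behavior in Theorem \ref{thm2} forces the stated lower bound. For the upper bound, Theorem \ref{thm1} gives $\|M_\Omega(f)\|_{L^{1,\infty}(\rn)}\le C\co\|f\|_{L^1(\rn)}$ for every $f\in L^1(\rn)$, which is precisely the statement that $\|M_\Omega\|_{L^1(\rn)\to L^{1,\infty}(\rn)}\le C\co$; nothing further is needed.

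For the lower bound I would argue by contradiction, or equivalently directly from the definition of the operator norm. Fix any $f\in L^1(\rn)$ with $\|f\|_{L^1(\rn)}\neq 0$. By definition of $\|M_\Omega\|_{L^1\to L^{1,\infty}}$, for \emph{every} $\lambda>0$ we have
\[
\lambda\,|\{x\in\rn:M_\Omega(f)(x)>\lambda\}|\;\le\;\|M_\Omega\|_{L^1(\rn)\to L^{1,\infty}(\rn)}\,\|f\|_{L^1(\rn)}.
\]
Now let $\lambda\to 0^+$ and invoke Theorem \ref{thm2}(i): the left-hand side converges to $n^{-1}\|\Omega\|_{L^1(\sn)}\|f\|_{L^1(\rn)}$. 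Hence
\[
n^{-1}\|\Omega\|_{L^1(\sn)}\|f\|_{L^1(\rn)}\;\le\;\|M_\Omega\|_{L^1(\rn)\to L^{1,\infty}(\rn)}\,\|f\|_{L^1(\rn)},
\]
and dividing by $\|f\|_{L^1(\rn)}>0$ yields $n^{-1}\|\Omega\|_{L^1(\sn)}\le\|M_\Omega\|_{L^1(\rn)\to L^{1,\infty}(\rn)}$. Combining the two bounds gives the claim.

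I do not anticipate any genuine obstacle here, since both ingredients are supplied by Theorems \ref{thm2} and \ref{thm1}. The only point requiring a moment's care is that the lower-bound argument needs the existence of at least one admissible test function $f$ with finite nonzero $L^1$ norm for which Theorem \ref{thm2}(i) applies — but that theorem holds for \emph{all} $f\in L^1(\rn)$, so, e.g., $f=\chi_{B(0,1)}$ suffices, and one could even note that the left-hand limit in Theorem \ref{thm2}(i) is independent of the choice of $f$ up to the factor $\|f\|_{L^1(\rn)}$, so the resulting lower bound is sharp in the sense captured by the limiting identity. This completes the proof.
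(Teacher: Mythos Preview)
Your proposal is correct and matches the paper's intended approach: the paper states the corollary explicitly ``as a consequence of Theorem \ref{thm2} and Theorem \ref{thm1}'' without giving a separate proof, and your argument fills in precisely the routine verification---Theorem \ref{thm1} gives the upper bound directly, while Theorem \ref{thm2}(i) combined with the trivial inequality $\lambda|\{M_\Omega f>\lambda\}|\le \|M_\Omega\|_{L^1\to L^{1,\infty}}\|f\|_{L^1}$ gives the lower bound upon letting $\lambda\to 0^+$.
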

\begin{rem}
{Very recently, the corresponding results for singular integrals with rough kernels $\Omega\in L\log L(\sn)$ were established in \cite{QWX2021}.}
	
\end{rem}
The organization of this paper is as follows.  Section \ref{s2}	will be devoted to proving Theorem \ref{thm1}. In Section \ref{s3}, the proof of Theorem \ref{thm2} will be demonstrated.

Throughout this paper, the letter C will stand for positive constants not necessarily the same one
at each occurrence but independent of the essential variables
\vspace{0.1cm}

\vspace{0.1cm}


\section{Proof of Theorem \ref{thm1}.}\label{s2}

We need to {recall} some notations first. Let $\sn$ be the unit sphere on the Euclidean space $\rn$, $d\sigma(\cdot)$ be the induced Lebesgue measure on $\sn$, and $L\log L(\sn)$ be the function space on $\sn$ contains all $\Omega$ satisfying
\[\|\Omega\|_{L\log L(\sn)} = \int_\sn|\Omega(\theta)|\log(\text{e}+|\Omega(\theta)|)d\sigma(\theta) < \infty.\]
Set $\phi$ be a smooth, radial, nonnegative function in $\rn$ which supported in $\{x:1/2\le|x|\le4\}$, satisfies $\phi\le1$ and
\[\phi(x)\equiv1\quad\text{for }1\le|x|\le2.\]


In order to prove Theorem \ref{thm1}, we need the following lemma which is attributed to Christ and Rubio de Francia \cite{CR1988}.

\begin{lem}[\cite{CR1988}]\label{lem21}
	Define
	\[L^x(\Omega)(\theta) = \int_{\mathbb R}\phi(x-t\theta)\phi(t\theta)\Omega(x-t\theta)dt,\]
	and
	\[T(\Omega)(\theta) = \int_\rn L^x(\Omega)(\theta)\widetilde B_{-s}(x)dx,\]
	where $\widetilde B_{-s} = \sum\limits_{\widetilde Q}b_{\widetilde Q}$ satisfies
	\begin{enumerate}
		\item[{\rm (i)}] There exists $x_0\in\rn$, for all $\widetilde Q$, $x_0+\widetilde Q$ is a dyadic cube with length $2^{-s}$;
		\item[{\rm (ii)}] $\text{supp }b_{\widetilde Q} \subset \widetilde Q$, $\int b_{\widetilde Q} = 0$ and $\|b_{\widetilde Q}\|_{L^1(\rn)} \le 2^{n+1}\lambda|\widetilde Q|$.
	\end{enumerate}
		Then there exists $\gamma>0$, for all $\Omega\in L^\infty(\sn)$, all $s>-3$ and all $\widetilde B_{-s}$,
	\[\|T(\Omega)\|_{L^1(\sn)} \le C2^{-\gamma s}\lambda\|\Omega\|_{L^\infty(\sn)}.\]
\end{lem}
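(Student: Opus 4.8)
\emph{Proof sketch.} Set $K(z):=\phi(z)\Omega(z)$. Since $\Omega$ is homogeneous of degree zero and $0\le\phi\le1$ is supported in $\{1/2\le|z|\le4\}$, $K\in L^\infty(\rn)$ is supported in that annulus and $\|K\|_{L^\infty}\le\|\Omega\|_{L^\infty(\sn)}$, so $\|K\|_{L^2(\rn)}\le C\|\Omega\|_{L^\infty(\sn)}$. As $\phi$ is radial, $\phi(x-t\theta)\phi(t\theta)\Omega(x-t\theta)=\phi_0(|t|)K(x-t\theta)$ with $\phi_0$ the radial profile of $\phi$; hence $x\mapsto L^x(\Omega)(\theta)$ is the convolution of $K$, in the single direction $\theta$, with the one-dimensional measure $d\mu(t)=\phi_0(|t|)\,dt$ carried by the line $\mathbb R\theta$. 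Writing $\ast_\theta$ for this convolution, Fubini and the evenness of $\mu$ give
\[
	T(\Omega)(\theta)=\int_{\rn}(K\ast_\theta\mu)(x)\,\widetilde B_{-s}(x)\,dx=\int_{\rn}K(y)\,\bigl(\widetilde B_{-s}\ast_\theta\mu\bigr)(y)\,dy ,
\]
so, by Plancherel in $y$,
\[
	|T(\Omega)(\theta)|\le\int_{\rn}\bigl|\widehat K(\xi)\bigr|\,\bigl|\Psi(\xi\cdot\theta)\bigr|\,\bigl|\widehat{\widetilde B_{-s}}(\xi)\bigr|\,d\xi ,
\]
where $\Psi$ is the Fourier transform of $t\mapsto\phi_0(|t|)$, a Schwartz function, so $|\Psi(r)|\le C_N(1+|r|)^{-N}$ for every $N$. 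The task is to bound the $\sn$-average of the right-hand side by $C2^{-\gamma s}\lambda\|\Omega\|_{L^\infty(\sn)}$.

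Two soft gains present themselves. First, $\int b_{\widetilde Q}=0$ with $\operatorname{supp}b_{\widetilde Q}\subset\widetilde Q$ and $\operatorname{diam}\widetilde Q\le C2^{-s}$ give $|\widehat{b_{\widetilde Q}}(\xi)|=|\widehat{b_{\widetilde Q}}(\xi)-\widehat{b_{\widetilde Q}}(0)|\le C\min(2^{-s}|\xi|,1)\|b_{\widetilde Q}\|_{L^1}$; summing over the pairwise disjoint $\widetilde Q$ and using $\|b_{\widetilde Q}\|_{L^1}\le2^{n+1}\lambda|\widetilde Q|$ (and, as one may always arrange, $\|b_{\widetilde Q}\|_{L^\infty}\le C\lambda$, so that $\|\widetilde B_{-s}\|_{L^2(\rn)}\le C\lambda$) one obtains $|\widehat{\widetilde B_{-s}}(\xi)|\le C\lambda\min(2^{-s}|\xi|,1)$. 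Second, the rapid decay of $\Psi$ averages favourably over the sphere: for $\xi=|\xi|\omega$ the band $\{\theta\in\sn:|\omega\cdot\theta|\le u\}$ has measure $\le Cu$, whence $\int_{\sn}|\Psi(\xi\cdot\theta)|\,d\sigma(\theta)\le C(1+|\xi|)^{-1}$ for every $n\ge2$. Integrating the pointwise bound over $\sn$ and inserting these reduces the lemma to
\[
	\int_{\rn}\bigl|\widehat K(\xi)\bigr|\,\min(2^{-s}|\xi|,1)\,(1+|\xi|)^{-1}\,d\xi\le C\,2^{-\gamma s}\,\|\Omega\|_{L^\infty(\sn)} .
\]
Over the two extreme frequency ranges this is routine: for $|\xi|\le2^{\beta s}$ (with $\beta$ a small constant) the weight is $\le2^{-s}$ and Cauchy–Schwarz against $\|\widehat K\|_{L^2}\le C\|\Omega\|_{L^\infty}$ costs only $2^{\beta s n/2}$, leaving a net gain; for $|\xi|\ge2^{s}$ a dyadic decomposition using $\|\widehat K\|_{L^2(|\xi|\sim R)}\le C\|\Omega\|_{L^\infty}$, the factor $\|\widehat{\widetilde B_{-s}}\|_{L^2}\le C\lambda$ reinstated, and the summable weight $2^{-s}(1+|\xi|)^{-1}$ does it.

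The heart of the matter — and the step I expect to fight hardest — is the intermediate range $2^{\beta s}\le|\xi|\le2^{s}$. There the cancellation gain $2^{-s}|\xi|$ is too weak near the top of the range, while the spherical-averaging gain $(1+|\xi|)^{-1}$ cannot absorb the factor $|\xi|^{n/2}$ lost in passing from $|\widehat K|$ to $\|\widehat K\|_{L^2}$; for a kernel whose angular oscillation lives at scale $\sim2^{-s}$ the bare estimate above loses in every dimension $n\ge2$. This is precisely the obstruction that makes the lemma the technical core of Christ and Rubio de Francia's argument: $K=\phi\Omega$ is only $L^\infty$, i.e.\ arbitrarily rough in the angular variable, and that roughness is not controlled by any modulus of continuity of $\Omega$ uniform over the unit ball of $L^\infty(\sn)$, so no purely soft estimate suffices here. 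The plan is therefore to decompose $\Omega=\sum_{j\ge0}\Omega_j$ into Littlewood–Paley pieces of angular frequency $\sim2^{j}$ on $\sn$ — so that $\widehat{\phi\Omega_j}$ concentrates in $|\xi|\sim2^{j}$, which rests on estimates for the Hankel/Bessel transforms of the radial profile $\phi_0$ — and to analyse each single-scale piece with $2^{j}\le2^{s}$ more carefully, extracting the needed decay not from the two soft gains alone but from the mismatch between the essentially isotropic frequency profile of $\widetilde B_{-s}$ and the constrained, tube-like frequency support of $\phi\Omega_j$, together with the oscillation of the product $\widehat{\phi\Omega_j}\,\overline{\widehat{\widetilde B_{-s}}}$ in $\xi$ that the modulus bound discards; the pieces with $2^{j}\gtrsim2^{s}$ are already covered above, and summing over $j$ produces the asserted decay $C2^{-\gamma s}$ with a small, dimension-dependent $\gamma>0$. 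Establishing this Fourier inequality then completes the proof; the hypothesis $s>-3$ enters only as the harmless normalization $2^{-s}<8$, ensuring that the cubes $\widetilde Q$ and the annulus carrying $K$ have comparable bounded sizes.
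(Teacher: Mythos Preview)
The paper itself does not prove this lemma; it is quoted from Christ--Rubio de Francia \cite{CR1988} and used as a black box, so there is no in-paper argument to compare against---only the original. That said, your Fourier-side reduction is set up sensibly, and the low- and high-frequency regimes are essentially fine (modulo two quibbles: you silently restrict $\widetilde B_{-s}$ to the ball $\{|x|\le 8\}$ carrying $L^x$, which is what makes $\sum_{\widetilde Q}\|b_{\widetilde Q}\|_{L^1}\le C\lambda$ true, and you add the hypothesis $\|b_{\widetilde Q}\|_{L^\infty}\le C\lambda$, which is not in the lemma, though it does hold in the application). The genuine gap is precisely where you put it: the range $2^{\beta s}\le|\xi|\le 2^s$. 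Your proposed repair---spherical Littlewood--Paley on $\Omega$ together with a ``tube-like frequency support'' for $\phi\Omega_j$---does not close as written. By Hecke--Bochner, $\widehat{\phi\Omega_j}$ concentrates on the \emph{annulus} $|\xi|\sim 2^j$, not on a tube, and on that annulus the Cauchy--Schwarz loss $\|\Omega_j\|_{L^2}\cdot 2^{jn/2}$ against the weight $2^{-s}$ is exactly the same obstruction as before. The ``mismatch'' you invoke between the frequency profiles of $\widetilde B_{-s}$ and $\phi\Omega_j$ is never made into a concrete inequality; as it stands this is a restatement of the difficulty rather than a mechanism for overcoming it.

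For contrast, the Christ--Rubio de Francia argument stays entirely in physical space. Its core is a H\"older estimate for the $L^1(\sn)$-valued map $x\mapsto L^x(\Omega)$: there exists $\gamma>0$ with
\[
\int_{\sn}\bigl|L^{x}(\Omega)(\theta)-L^{x'}(\Omega)(\theta)\bigr|\,d\sigma(\theta)\le C\,|x-x'|^{\gamma}\,\|\Omega\|_{L^\infty(\sn)}
\]
uniformly for $x,x'$ in a fixed ball, proved by a direct geometric analysis of how the weighted line integral defining $L^x$ changes under a small shift of $x$. Once this is in hand, the mean-zero condition $\int b_{\widetilde Q}=0$ replaces each cube's contribution by $\int (L^x-L^{c_{\widetilde Q}})(\Omega)(\theta)\,b_{\widetilde Q}(x)\,dx$; the H\"older bound gives a factor $2^{-\gamma s}$, and summing over the $O(2^{sn})$ cubes meeting $\{|x|\le 8\}$ with $\|b_{\widetilde Q}\|_{L^1}\le C\lambda\,2^{-sn}$ yields $C\,2^{-\gamma s}\lambda\|\Omega\|_{L^\infty}$. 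This bypasses the frequency-space obstruction you identified altogether; if you want to push the Fourier approach through, the missing ingredient is a quantitative substitute for this H\"older regularity, and the sketch does not supply one.
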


\begin{proof}[Proof of Theorem \ref{thm1}] The main idea of this proof is from Christ and Rubio de Francia \cite{CR1988}, Ding and Lai \cite{DL2019}. Without losing generality, we assume $\Omega$ and $f$ are both nonnegative and $\co>0$. For readability, we split the proof into four steps.

\vspace{0.2cm}


\noindent{\bf Step 1: Control of $M_\Omega(f)$.}

Since $\phi$ is smooth, then there exists $\alpha\in(0,1)$ such that
\[\phi(x)>\frac12\quad\text{for }2^{-\alpha}\le|x|\le2.\]
Define
\[\phi_j(x) = \frac1{2^{jn}}\phi(2^{-j}x).\]
thus
\[\phi_j(x)>\frac1{2^{jn+1}}\quad\text{for }2^{j-\alpha}\le|x|\le2^{j+1}.\]

Let $\Omega_j=\phi_j\Omega$. For any fixed $x\in\rn$, by the definition of $M_\Omega(f)(x)$, there exists $r_x\in(0,\infty)$ such that
\[M_\Omega(f)(x) \le \frac{2^{\alpha n/2}}{r_x^n}\int_{B(x,r_x)}\Omega(x-y)f(y)dy.\]
Let $j_x\in\mathbb Z$, satisfy $2^{j_x}\le r_x<2^{j_x+1}$, then
\begin{align*}
	M_\Omega(f)(x) \le& \frac{2^{\alpha n/2}}{2^{j_xn}}\int_{B(x,2^{j_x+1})}\Omega(x-y)f(y)dy \\
	=& \frac{2^{1+\alpha n/2}}{2^{j_xn+1}}\int_{B(x,2^{j_x+1})\backslash B(x,2^{j_x-\alpha})}\Omega(x-y)f(y)dy \\
	&+ \frac1{2^{\alpha n/2}2^{(j_x-\alpha)n}}\int_{B(x,2^{j_x-\alpha})}\Omega(x-y)f(y)dy \\
	\le& 2^{1+\alpha n/2}\int_\rn\phi_{j_x}(x-y)\Omega(x-y)f(y)dy + \frac1{2^{\alpha n/2}}M_\Omega(f)(x).
\end{align*}
Therefore, for all $x\in\rn$,
\begin{equation}\label{eq3.1}
	M_\Omega(f)(x) \le C\Omega_{j_x}\ast f(x) \le C\sup_j\Omega_j\ast f(x).
\end{equation}


\noindent{\bf Step 2: C-Z decomposition.}

For $\lambda>0$, applying C-Z decomposition at level $\frac{n\lambda}{2^{3n+1}\co}$, we have the following conclusions:
\begin{enumerate}
	\item[(cz-f)] $f = g+b = g+\sum\limits_{Q\in\mathcal Q}b_Q$;
	\item[(cz-g)] $\|g\|_{L^\infty(\rn)} \le \frac{n\lambda}{2^{2n+1}\co}$;
	\item[(cz-Q)] $\mathcal Q$ is a countable set of disjoint dyadic cubes. Let $E=\bigcup_{Q\in\mathcal Q}Q$, then $|E| \le \frac{2^{3n+1}\co}{n\lambda}\|f\|_{L^1(\rn)}$;
	\item[(cz-b)] For each $Q\in\mathcal Q$, we have $\text{supp }b_Q\subset Q$, $\int b_Q=0$ and $\|b_Q\|_{L^1(\rn)} \le \frac{n\lambda}{2^{2n}\co}|Q|$. Thus by (cz-Q), we have $\|b\|_{L^1(\rn)} \le 2^{2n+1}\|f\|_{L^1(\rn)}$.
\end{enumerate}
Therefore it follows from (\ref{eq3.1}) that
\begin{equation}\label{eq3.2}
	\begin{aligned}
		& |\{x:M_\Omega(f)(x)>\lambda\}| \le C|\{x:\sup_j\Omega_j\ast f(x)>\lambda\}| \\
		&\le C\Big(\big|\big\{x:\sup_j|\Omega_j\ast g(x)|>\frac\lambda2\big\}\big| + \big|\big\{x:\sup_j|\Omega_j\ast b(x)|>\frac\lambda2\big\}\big|\Big).
	\end{aligned}
\end{equation}

As a matter of fact, for all $j\in\mathbb Z$ and $x\in\rn$, by (cz-g), we have
\begin{align*}
	|\Omega_j\ast g(x)| \le& \int_\rn\phi_j(y)\Omega(y)|g(x-y)|dy \le \|g\|_{L^\infty(\rn)}\int_{\{1/2\le|y|\le4\}}\Omega(y)dy \\
	=& \|g\|_{L^\infty(\rn)}\|\Omega\|_{L^1(\sn)}\frac{4^n-1/2^n}n \le \frac{n\lambda}{2^{2n+1}\co}\cdot\frac{4^n\co}n \le \frac\lambda2,
\end{align*}
which indicates that
\[\big\{x:\sup_j|\Omega_j\ast g(x)|>\frac\lambda2\big\}=\emptyset.\]
Combining with (\ref{eq3.2}), we can deduce that
\[|\{x:M_\Omega(f)(x)>\lambda\}| \le C\big|\big\{x:\sup_j|\Omega_j\ast b(x)|>\frac\lambda2\big\}\big|.\]

Denote $E^\ast = \bigcup_{Q\in\mathcal Q}2Q$. Then (cz-Q) implies that
\[|E^\ast| \le \sum_{Q\in\mathcal Q}|2Q| = 2^n\sum_{Q\in\mathcal Q}|Q| = 2^n|E| \le \frac{2^{4n+1}\co}{n\lambda}\|f\|_{L^1(\rn)}.\]
Therefore
\[|\{x:M_\Omega(f)(x)>\lambda\}| \le C\frac\co\lambda\|f\|_{L^1(\rn)} + C\big|\big\{x\in(E^\ast)^c:\sup_j|\Omega_j\ast b|(x)>\frac\lambda2\big\}\big|.\]

Let $B_s = \sum\limits_{Q\in\mathcal Q,|Q|=2^{sn}}b_Q$. Then we have
\[\sup_j|\Omega_j\ast b(x)| = \sup_j|\sum_s\Omega_j\ast B_{j-s}(x)| \le \sup_j\sum_s|\Omega_j\ast B_{j-s}(x)| \le \sum_s\sup_j|\Omega_j\ast B_{j-s}(x)|.\]
Notice that for all $s\le-3$ and $x\in(E^\ast)^c$, $\Omega_j\ast B_{j-s} = 0$. Therefore
\begin{equation}\label{eq3.3}
	\begin{aligned}
		|\{x:M_\Omega(f)(x)>\lambda\}| \le& C\frac\co\lambda\|f\|_{L^1(\rn)} \\
		&+ C\big|\big\{x\in(E^\ast)^c:\sum_{s>-3}\sup_j|\Omega_j\ast B_{j-s}|(x)>\frac\lambda2\big\}\big|.
	\end{aligned}
\end{equation}


\noindent{\bf Step 3: Split of $\Omega_j$.}

Let
\begin{align*}
	\mathcal D_s &= \{\theta\in\sn:\Omega(\theta)\ge2^{\gamma s/3}\|\Omega\|_{L^1(\sn)}\}; \\
	E_s &= \big\{x\in\rn:\frac x{|x|}\in\mathcal D_s\big\}.
\end{align*}
and $\Omega_j^s = \phi_j\Omega\chi_{E_s}, K_j^s = \Omega_j-\Omega_j^s$. Then it is easy to see that
\begin{equation}\label{eq3.4}
	\begin{aligned}
		& \big|\big\{x\in(E^\ast)^c:\sum_{s>-3}\sup_j|\Omega_j\ast B_{j-s}|(x)>\frac\lambda2\big\}\big| \\
		&\le \big|\big\{x\in(E^\ast)^c:\sum_{s>-3}\sup_j|\Omega_j^s\ast B_{j-s}|(x)>\frac\lambda4\big\}\big| \\
		&\quad + \big|\big\{x\in(E^\ast)^c:\sum_{s>-3}\sup_j|K_j^s\ast B_{j-s}|(x)>\frac\lambda4\big\}\big|.
	\end{aligned}
\end{equation}

For the first term on the right side of (\ref{eq3.4}), from a basic fact that
\begin{align*}
	\|\Omega_j^s\|_{L^1(\rn)} &= \int_\rn\phi_j(y)\Omega(y)\chi_{E_s}(y)dy = \int_\rn\phi(y)\Omega(y)\chi_{E_s}(y)dy \\
	&\le \int_{\{1/2\le|y|\le4\}}\Omega(y)\chi_{E_s}(y)dy = \int_{\mathcal D_s}\int_{1/2}^4\Omega(\theta)t^{n-1}dtd\sigma(\theta) \\
	&= \frac{4^n-1/2^n}n\int_{\mathcal D_s}\Omega(\theta)d\sigma(\theta),
\end{align*}
thus, by (cz-b) and Chebyshev's inequality, we get
\begin{equation}\label{eq3.5}
	\begin{aligned}
		& \big|\big\{x\in(E^\ast)^c:\sum_{s>-3}\sup_j|\Omega_j^s\ast B_{j-s}|(x)>\frac\lambda4\big\}\big| \\
		&\le \frac 4\lambda\big\|\sum_{s>-3}\sup_j|\Omega_j^s\ast B_{j-s}|\big\|_{L^1(\rn)} \le \frac 4\lambda\sum_{s>-3}\sum_j\big\|\Omega_j^s\ast B_{j-s}\big\|_{L^1(\rn)} \\
		&\le \frac 4\lambda\sum_{s>-3}\sum_j\|\Omega_j^s\|_{L^1(\rn)}\|B_{j-s}\|_{L^1(\rn)} \\
		&\le \frac C\lambda\sum_{s>-3}\sum_j\|B_{j-s}\|_{L^1(\rn)}\int_{\mathcal D_s}\Omega(\theta)d\sigma(\theta) \\
		&= \frac C\lambda\|b\|_{L^1(\rn)}\sum_{s>-3}\int_{\{2^{\gamma s/3}\le\frac{\Omega(\theta)}{\|\Omega\|_{L^1(\rn)}}\le2^{\gamma(s+1)/3}\}}(s+3)\Omega(\theta)d\sigma(\theta) \\
		&\le \frac C\lambda\|f\|_{L^1(\rn)}\int_\sn\Omega(\theta)\Big(\log^+\frac{\Omega(\theta)}{\|\Omega\|_{L^1(\rn)}}+1\Big)d\sigma(\theta) \\
		&\le C\frac\co\lambda\|f\|_{L^1(\rn)}.
	\end{aligned}
\end{equation}

For the second term on the right side of (\ref{eq3.4}), we claim that for any fixed $s>-3$, it holds that
\begin{equation}\label{eq3.6}
	\big\|\sup_j|K_j^s\ast B_{j-s}|\big\|_{L^2(\rn)}^2 \le C2^{-\gamma s/3}\lambda\|\Omega\|_{L^1(\sn)}\|b\|_{L^1(\rn)},
\end{equation}
which will be proved in Step 4. Now applying (cz-b) and Chebyshev's inequality again, we can deduce that,
\begin{equation}\label{eq3.7}
	\begin{aligned}
		& \big|\big\{x\in(E^\ast)^c:\sum_{s>-3}\sup_j|K_j^s\ast B_{j-s}(x)|>\frac\lambda4\big\}\big| \\
		&\le \frac{16}{\lambda^2}\Big\|\sum_{s>-3}\sup_j|K_{j,v}^s\ast B_{j-s}|\Big\|_{L^2(\rn)}^2 \\
		&\le \frac{64}{\lambda^2}\Big(\sum_{s>-3}\big\|\sup_j|K_{j,v}^s\ast B_{j-s}|\big\|_{L^2(\rn)}\Big)^2 \\
		&\le \frac C\lambda\|\Omega\|_{L^1(\sn)}\|f\|_{L^1(\rn)} \le C\frac\co\lambda\|f\|_{L^1(\rn)}.
	\end{aligned}
\end{equation}

Combining (\ref{eq3.4}), (\ref{eq3.5}), (\ref{eq3.7}) with (\ref{eq3.3}), we conclude that
\[|\{x:M_\Omega(f)(x)>\lambda\}| \le C\frac\co\lambda\|f\|_{L^1(\rn)}.\]
{This is the desired conclusion. It remains to prove (\ref{eq3.6}).}

\vspace{0.2cm}


\noindent{\bf Step 4: Proof of (\ref{eq3.6}).}

Once we show that
\begin{equation}\label{eq3.8}
	\|K_j^s\ast B_{j-s}\|_{L^2(\rn)}^2 \le C2^{-\gamma s/3}\lambda\|\Omega\|_{L^1(\sn)}\|B_{j-s}\|_{L^1(\rn)},
\end{equation}
it follows immediately that
\begin{align*}
	\big\|\sup_j|K_j^s\ast B_{j-s}|\big\|_{L^2(\rn)}^2 &\le \Big\|\Big(\sum_j|K_j^s\ast B_{j-s}|^2\Big)^{1/2}\Big\|_{L^2(\rn)}^2 \\
	&= \sum_j\|K_j^s\ast B_{j-s}\|_{L^2(\rn)}^2 \\
	&\le C2^{-\gamma s/3}\lambda\|\Omega\|_{L^1(\sn)}\sum_j\|B_{j-s}\|_{L^1(\rn)} \\
	&= C2^{-\gamma s/3}\lambda\|\Omega\|_{L^1(\sn)}\|b\|_{L^1(\rn)},
\end{align*}
where the last equality is from (cz-b).

To see (\ref{eq3.8}), we rewrite $\|K_j^s\ast B_{j-s}\|_{L^2(\rn)}^2$ as follows
\begin{align*}
	\|K_j^s\ast B_{j-s}\|_{L^2(\rn)}^2 &= \int_\rn K_j^s\ast B_{j-s}(x)\cdot K_j^s\ast B_{j-s}(x)dx \\
	&= \int_\rn\mathcal F^{-1}(K_j^s)(x)\cdot\mathcal F^{-1}(B_{j-s})(x)\cdot\mathcal F\big(K_j^s\ast B_{j-s}\big)(x)dx \\
	&= \int_\rn\mathcal F(\widetilde K_j^s)(x)\cdot\mathcal F^{-1}(B_{j-s})(x)\cdot\mathcal F\big(K_j^s\ast B_{j-s}\big)(x)dx \\
	&= \int_\rn\mathcal F\big(\widetilde K_j^s\ast K_j^s\ast B_{j-s}\big)(x)\cdot\mathcal F^{-1}(B_{j-s})(x)dx \\
	&= \int_\rn\widetilde K_j^s\ast K_j^s\ast B_{j-s}(x)\cdot B_{j-s}(x)dx,
\end{align*}
where $\mathcal F$ and $\mathcal F^{-1}$ are the Fourier transform and the inverse Fourier transform, and $\widetilde K_j^s(x) = K_j^s(-x)$. Then we only need to show that
\[\|\widetilde K_j^s\ast K_j^s\ast B_{j-s}\|_{L^\infty(\rn)} \le C2^{-\gamma s/3}\lambda\|\Omega\|_{L^1(\sn)}.\]

Furthermore, the problem can be reduced to prove that
\begin{equation}\label{eq3.9}
	|\widetilde K_0^s\ast K_0^s\ast\widetilde B_{-s}(0)| \le C2^{-\gamma s/3}\lambda\co\|\Omega\|_{L^1(\sn)},
\end{equation}
where $\widetilde B_{-s}$ satisfies the conditions (i) and (ii) in Lemma \ref{lem21}. Suppose (\ref{eq3.9}) holds, then for all $x\in\rn$, denote
\[B_{j-s}^j(z) = B_{j-s}(2^jz),\qquad B_{j-s}^{j,x}(z) = B_{j-s}^j(z+2^{-j}x).\]
It is not hard to verify that $\frac{2^{2n}\co}nB_{j-s}^{j,x}$ is one of $\widetilde B_{-s}$. Therefore
\begin{align*}
	|\widetilde K_j^s\ast K_j^s\ast B_{j-s}(x)| &= \Big|\int_\rn\int_\rn\frac1{2^{2jn}}\widetilde K_0^s(2^{-j}(x-y-z))K_0^s(2^{-j}y)B_{j-s}(z)dydz\Big| \\
	&= \Big|\int_\rn\int_\rn\widetilde K_0^s(2^{-j}x-y-z)K_0^s(y)B_{j-s}^j(z)dydz\Big| \\
	&= \Big|\int_\rn\int_\rn\widetilde K_0^s(0-y-z)K_0^s(y)B_{j-s}^{j,x}(z)dydz\Big| \\
	&= \frac n{2^{2n}\co}\Big|\widetilde K_0^s\ast K_0^s\ast\frac{2^{2n}\co}nB_{j-s}^{j,x}(0)\Big| \le C2^{-\varepsilon s/3}\lambda\|\Omega\|_{L^1(\sn)}.
\end{align*}

Back to (\ref{eq3.9}), we split $\Omega\chi_{\rn\backslash E_s}$ by an even function $\Omega_s'$ and an odd function $\Omega_s''$ as follows:
\begin{align*}
	\Omega(x)\chi_{\rn\backslash E_s}(x) =& \frac{\Omega(x)\chi_{\rn\backslash E_s}(x) + \Omega(-x)\chi_{\rn\backslash E_s}(-x)}2 \\
	&+ \frac{\Omega(x)\chi_{\rn\backslash E_s}(x) - \Omega(-x)\chi_{\rn\backslash E_s}(-x)}2 \\
	=&: \Omega_s'(x) + \Omega_s''(x).
\end{align*}
Obviously $\Omega_s'$ and $\Omega_s''$ are homogeneous of degree $0$ and enjoy
\begin{equation}\label{eq3.10}
	\begin{aligned}
		\|\Omega_s'\|_{L^\infty(\sn)} &\le \|\Omega\chi_{\sn\backslash\mathcal D_s}\|_{L^\infty(\sn)} \le 2^{\gamma s/3}\|\Omega\|_{L^1(\sn)}; \\
		\|\Omega_s''\|_{L^\infty(\sn)} &\le \|\Omega\chi_{\sn\backslash\mathcal D_s}\|_{L^\infty(\sn)} \le 2^{\gamma s/3}\|\Omega\|_{L^1(\sn)}.
	\end{aligned}
\end{equation}
Thus
\begin{align*}
	\widetilde K_0^s\ast K_0^s(x) &= \int_\rn\phi(y-x)\Omega(y-x)\chi_{\rn\backslash E_s}(y-x)\phi(y)\Omega(y)\chi_{\rn\backslash E_s}(y)dy \\
	&= \int_\rn\phi(x-y)\phi(y)\big(\Omega_s'(x-y)-\Omega_s''(x-y)\big)\big(\Omega_s'(y)+\Omega_s''(y)\big)dy \\
	&= (\phi\Omega_s')\ast(\phi\Omega_s')(x) - (\phi\Omega_s'')\ast(\phi\Omega_s'')(x).
\end{align*}
Reform $(\phi\Omega_s')\ast(\phi\Omega_s')$ as
\begin{align*}
	(\phi\Omega_s')\ast(\phi\Omega_s')(x) &= \int_\rn\phi(x-y)\phi(y)\Omega_s'(x-y)\Omega_s'(y)dy \\
	&= \int_\sn\int_{\mathbb R}\phi(x-t\theta)\phi(t\theta)\Omega_s'(x-t\theta)t^{n-1}dt\Omega_s'(\theta)d\sigma(\theta) \\
	&= \int_\sn L^x(\Omega_s')(\theta)\Omega_s'(\theta)d\sigma(\theta),
\end{align*}
where the second equality follows from $\Omega_s'$ is homogeneous of degree $0$, and $L^x$ was introduced in Lemma \ref{lem21}. Similarly, we have
\[(\phi\Omega_s'')\ast(\phi\Omega_s'')(x) = \int_\sn L^x(\Omega_s'')(\theta)\Omega_s''(\theta)d\sigma(\theta).\]
Therefore
\begin{align*}
	(\phi\Omega_s')\ast(\phi\Omega_s')\ast\widetilde B_{-s}(0) &= \int_\rn(\phi\Omega_s')\ast(\phi\Omega_s')(x)\widetilde B_{-s}(-x)dx \\
	&= \int_\rn\int_\sn L^x(\Omega_s')(\theta)\Omega_s'(\theta)d\sigma(\theta)\widetilde B_{-s}(-x)dx \\
	&= \int_\sn\int_\rn L^x(\Omega_s')(\theta)\widetilde B_{-s}(-x)dx\Omega_s'(\theta)d\sigma(\theta) \\
	&= \int_\sn T(\Omega_s')(\theta)\Omega_s'(\theta)d\sigma(\theta)
\end{align*}
and
\[(\phi\Omega_s'')\ast(\phi\Omega_s'')\ast\widetilde B_{-s}(0) = \int_\sn T(\Omega_s'')(\theta)\Omega_s''(\theta)d\sigma(\theta),\]
here $T$ was also introduced in Lemma \ref{lem21}. It follows from (\ref{eq3.10}) that
\begin{align*}
	|\widetilde K_0^s\ast K_0^s\ast\widetilde B_{-s}(0)| &\le \Big|\int_\sn T(\Omega_s')(\theta)\Omega_s'(\theta)d\sigma(\theta)\Big| + \Big|\int_\sn T(\Omega_s'')(\theta)\Omega_s''(\theta)d\sigma(\theta)\Big| \\
	&\le \|\Omega_s'\|_{L^\infty(\sn)}\|T(\Omega_s')\|_{L^1(\sn)} + \|\Omega_s''\|_{L^\infty(\sn)}\|T(\Omega_s'')\|_{L^1(\sn)} \\
	&\le 2^{\gamma s/3}\|\Omega\|_{L^1(\sn)}\big(\|T(\Omega')\|_{L^1(\sn)} + \|T(\Omega'')\|_{L^1(\sn)}\big).
\end{align*}
So we only need to show that
\[\|T(\Omega')\|_{L^1(\sn)} \le C2^{-2\gamma s/3}\lambda\co \quad\text{and}\quad \|T(\Omega'')\|_{L^1(\sn)} \le C2^{-2\gamma s/3}\lambda\co,\]
which is an immediate consequence of Lemma \ref{lem21}:
\[\|T(\Omega')\|_{L^1(\sn)} \le C2^{-\gamma s}\lambda\|\Omega'\|_{L^\infty(\sn)} \le C2^{-2\gamma s/3}\lambda\|\Omega\|_{L^1(\sn)} \le C2^{-2\gamma s/3}\lambda\co;\]
\[\|T(\Omega'')\|_{L^1(\sn)} \le C2^{-\gamma s}\lambda\|\Omega''\|_{L^\infty(\sn)} \le C2^{-2\gamma s/3}\lambda\|\Omega\|_{L^1(\sn)} \le C2^{-2\gamma s/3}\lambda\co.\]
The proof of Theorem \ref{thm1} is finished.
\end{proof}

\section{Proof of Theorem \ref{thm2}\label{s3}}

The next lemma plays a fundamental role in proving the limiting weak-type behaviors. It is a special version of Lemma 2.1 in \cite{QWX2021}, so we omit the proof here.

\begin{lem}[\cite{QWX2021}]\label{lem22}
	Let $\lambda>0$, $\Phi\in L^1(\sn)$, $S\subset\sn$ be a measurable set. Then we have
	\[\bigg|\bigg\{x\in\rn:\frac{|\Phi(x\big/|x|)|}{|x|^n}>\lambda,\frac x{|x|}\in S\bigg\}\bigg| = \frac{\|\Phi\|_{L^1(S)}}{n\lambda}.\]
\end{lem}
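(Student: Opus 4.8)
The plan is to evaluate the measure of the level set directly by passing to polar coordinates. Writing each nonzero $x\in\rn$ as $x=r\theta$ with $r=|x|>0$ and $\theta=x/|x|\in\sn$, the Lebesgue measure decomposes as $dx=r^{n-1}\,dr\,d\sigma(\theta)$. Since $\{0\}$ is Lebesgue-null, I would discard it and work on $\rn\backslash\{0\}$; note also that the set in question is measurable, because $x\mapsto x/|x|$ is continuous on $\rn\backslash\{0\}$, $\Phi$ is measurable, and $S$ is measurable by hypothesis.

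Under this substitution the two defining conditions of
\[\bigg\{x\in\rn:\frac{|\Phi(x/|x|)|}{|x|^n}>\lambda,\ \frac x{|x|}\in S\bigg\}\]
become $\theta\in S$ together with $r^n<|\Phi(\theta)|/\lambda$. For fixed $\theta\in S$ the admissible radii form the interval $\big(0,(|\Phi(\theta)|/\lambda)^{1/n}\big)$ when $\Phi(\theta)\neq0$ and the empty set when $\Phi(\theta)=0$; in both cases the contribution equals $\int_0^{(|\Phi(\theta)|/\lambda)^{1/n}}r^{n-1}\,dr=\frac1n\cdot\frac{|\Phi(\theta)|}{\lambda}$, where the integral over an empty interval is understood as $0$, which is consistent since $|\Phi(\theta)|=0$ there.

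Consequently, by Tonelli's theorem --- legitimate since the integrand is nonnegative and $\Phi\in L^1(\sn)$ ---
\[\bigg|\bigg\{x\in\rn:\frac{|\Phi(x/|x|)|}{|x|^n}>\lambda,\ \frac x{|x|}\in S\bigg\}\bigg|=\int_S\!\int_0^{(|\Phi(\theta)|/\lambda)^{1/n}}\!r^{n-1}\,dr\,d\sigma(\theta)=\frac1{n\lambda}\int_S|\Phi(\theta)|\,d\sigma(\theta)=\frac{\|\Phi\|_{L^1(S)}}{n\lambda},\]
which is the asserted identity. There is no real obstacle beyond bookkeeping: the only points requiring care are discarding the null set $\{0\}$, treating the zero set of $\Phi$ (where the radial integral vanishes), and invoking Tonelli rather than Fubini so that $\Phi\in L^1(\sn)$ is the only integrability hypothesis needed. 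An equivalent route would be to apply the distribution-function (layer-cake) formula in the radial variable alone, but the direct polar-coordinate computation is the most transparent.
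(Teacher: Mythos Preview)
Your proof is correct. The paper itself does not supply a proof of this lemma, instead citing it as a special case of a lemma in \cite{QWX2021} and omitting the argument; your direct polar-coordinate computation is exactly the natural way to establish it, and all the bookkeeping points you raise (discarding $\{0\}$, handling the zero set of $\Phi$, using Tonelli) are handled correctly.
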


Now we state some basic properties of $L\log L$ space.

\begin{lem}[\cite{QWX2021}]\label{lem23}
	If $\Phi_1(\theta),\Phi_2(\theta)\in L\log L(\sn)$, then the following properties hold:
	\begin{enumerate}
		\item[\rm{(i)}] $\Phi_1(\theta),\Phi_2(\theta)\in L^1(\sn)$, and for $i=1,2$, $\|\Phi_i\|_{L^1(\sn)}\le\|\Phi_i\|_{L\log L(\sn)};$
		\item[\rm{(ii)}] The quasi-triangle inequality is true in $L\log L$ space:
		\[\|\Phi_1+\Phi_2\|_{L\log L(\sn)}\le4(\|\Phi_1\|_{L\log L(\sn)}+\|\Phi_2\|_{L\log L(\sn)}).\]
	\end{enumerate}
\end{lem}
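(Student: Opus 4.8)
The plan is to establish the two assertions separately, both by elementary pointwise estimates for the auxiliary function $\psi(t):=t\log(e+t)$, $t\in[0,\infty)$, which is nonnegative and nondecreasing.

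For part (i), the point is simply that $\log(e+t)\ge1$ for every $t\ge0$, so that $|\Phi_i(\theta)|\le|\Phi_i(\theta)|\log(e+|\Phi_i(\theta)|)$ for a.e.\ $\theta\in\sn$; integrating over $\sn$ against $d\sigma$ gives $\|\Phi_i\|_{L^1(\sn)}\le\|\Phi_i\|_{L\log L(\sn)}$, and in particular $\Phi_i\in L^1(\sn)$.

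For part (ii), I would first prove the scalar inequality $\psi(a+b)\le4\big(\psi(a)+\psi(b)\big)$ for all $a,b\ge0$. Since $\psi$ is nondecreasing and $a+b\le2\max(a,b)$, we have $\psi(a+b)\le\psi\big(2\max(a,b)\big)$. Next I would show $\psi(2t)\le4\psi(t)$: indeed $\log(e+2t)\le\log\big(2(e+t)\big)=\log2+\log(e+t)\le2\log(e+t)$, using $\log(e+t)\ge1>\log2$, hence $\psi(2t)=2t\log(e+2t)\le4t\log(e+t)=4\psi(t)$. Combining these two facts with the trivial bound $\psi\big(\max(a,b)\big)\le\psi(a)+\psi(b)$ (one of the summands already equals the left side, and $\psi\ge0$) yields the scalar inequality. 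Finally, applying it with $a=|\Phi_1(\theta)|$ and $b=|\Phi_2(\theta)|$, together with $|\Phi_1+\Phi_2|\le|\Phi_1|+|\Phi_2|$ and monotonicity of $\psi$, gives $\psi\big(|\Phi_1(\theta)+\Phi_2(\theta)|\big)\le4\big(\psi(|\Phi_1(\theta)|)+\psi(|\Phi_2(\theta)|)\big)$ pointwise; integrating over $\sn$ produces the stated quasi-triangle inequality with constant $4$.

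I do not expect any genuine obstacle here; the only thing to be careful about is the constant bookkeeping, namely routing the factor $2$ coming from $a+b\le2\max(a,b)$ and the factor $2$ coming from $\log(e+2t)\le2\log(e+t)$ so that they produce exactly the constant $4$ claimed, and noting that it is precisely the normalization $e$ inside the logarithm (so that $\log(e+t)\ge1>\log2$) that makes the second estimate hold uniformly in $t\ge0$.
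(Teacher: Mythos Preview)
Your proof is correct. Note that the paper does not actually prove this lemma: it is stated with a citation to \cite{QWX2021} and no argument is given in the text, so there is no ``paper's own proof'' to compare against. Your elementary pointwise argument via $\psi(t)=t\log(e+t)$ and the doubling estimate $\psi(2t)\le4\psi(t)$ is a standard and clean way to obtain exactly the constant $4$.
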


\begin{proof} [Proof of Theorem \ref{thm2} (i)]
Without loss of generality, we may still assume $\Omega$ and $f$ are nonnegative, and $\|f\|_{L^1(\rn)}=1$.


For any $0<\varepsilon\ll\min\{1,\|\Omega\|_{L^1(\sn)}\}$, it is easy to see that there exists $r_{\varepsilon}>1$, such that
\[\int_{B(0,r_{\varepsilon})}f(x)dx>1-\varepsilon.\]
Now for $\lambda,\beta>0$, we denote
\begin{align*}
	F &= \{x\in\rn:M_\Omega(f)(x)>\lambda\}; \\
	F_1(\beta) &= \{x\in\rn:M_\Omega(f\chi_{B(0,r_\varepsilon)^c})(x)>\beta\lambda\}; \\
	F_2(\beta) &= \{x\in\rn:M_\Omega(f\chi_{B(0,r_\varepsilon)})(x)>\beta\lambda\}.
\end{align*}
By the sublinearity of $M_\Omega$ and the fact $M_\Omega(f\chi_{B(0,r_\varepsilon)}) \le M_\Omega(f)$, it follows that
\[F_2(1) \subset F \subset F_1(\sqrt\varepsilon) \cup F_2(1-\sqrt\varepsilon),\]
which indicates that
\begin{equation}\label{eq4.1}
	|F_2(1)| \le |F| \le |F_1(\sqrt\varepsilon)| + |F_2(1-\sqrt\varepsilon)|.
\end{equation}

For $|F_1(\sqrt\varepsilon)|$, Theorem \ref{thm1} yields that
\[\sqrt\varepsilon\lambda|F_1(\sqrt\varepsilon)| \le C\co\|\chi_{B(0,r_\varepsilon)^c}\|_{L^1(\rn)} \le C\co\varepsilon \le C\co\varepsilon^{3/4},\]
which gives that
\begin{equation}\label{eq4.2}
	|F_1(\sqrt\varepsilon)| \le C\co\frac{\varepsilon^{1/4}}\lambda.
\end{equation}
Thus by (\ref{eq4.1}) and (\ref{eq4.2}), we have
\begin{equation}\label{eq4.3}
	|F_2(1)| \le |F| \le |F_2(1-\sqrt\varepsilon)| + C\co\frac{\varepsilon^{1/4}}\lambda.
\end{equation}


To estimate $|F_2(1)|$ and $|F_2(1-\sqrt\varepsilon)|$, we need to decompose the rough kernel $\Omega$ first. Since $C(\sn)$ is dense in $L\log L(\sn)$, then there exists a nonnegative, continuous function $\Omega_\varepsilon$ on $\sn$ such that
\[\|\Omega-\Omega_\varepsilon\|_{L\log L(\sn)} < \varepsilon.\]
We extend $\Omega_\varepsilon$ from $\sn$ to $\rn$ such that it is also homogeneous of degree $0$. Furthermore, since for all $0<t<1$, $t|\log t| \le 4t^{3/4}/\text{e}$ holds, therefore
\begin{align*}
	\mathcal C_{\Omega-\Omega_\varepsilon} &\le 3\|\Omega-\Omega_\varepsilon\|_{L\log L(\sn)} + \|\Omega-\Omega_\varepsilon\|_{L^1(\sn)}\log^+\frac1{\|\Omega-\Omega_\varepsilon\|_{L^1(\sn)}} \\
	&\le 3\varepsilon + \frac4{\text{e}}\|\Omega-\Omega_\varepsilon\|_{L^1(\sn)}^{3/4} \le \left(3+\frac4{\text{e}}\right)\varepsilon^{3/4}.
\end{align*}

Now let
\begin{align*}
	F_2^1(\beta) &= \{x\in\rn:M_{\Omega-\Omega_\varepsilon}(f\chi_{B(0,r_\varepsilon)})(x)|>\beta\lambda\}; \\
	F_2^2(\beta) &= \{x\in\rn:M_{\Omega_\varepsilon}(f\chi_{B(0,r_\varepsilon)})(x)|>\beta\lambda\}.
\end{align*}
Then one can easily deduce that
\[F_2^2(1+\sqrt\varepsilon) \backslash F_2^1(\sqrt\varepsilon) \subset F_2 \qquad\text{and}\qquad F_2(1-\sqrt\varepsilon) \subset F_2^1(\sqrt\varepsilon) \cup F_2^2(1-2\sqrt\varepsilon),\]
which, with (\ref{eq4.3}), implies that
\begin{equation}\label{eq4.4}
	|F_2^2(1+\sqrt\varepsilon)| - |F_2^1(\sqrt\varepsilon)| \le F \le |F_2^1(\sqrt\varepsilon)| + |F_2^2(1-2\sqrt\varepsilon)| + C\co\frac{\varepsilon^{1/4}}\lambda.
\end{equation}

As for $|F_2^1(\sqrt\varepsilon)|$, it follows from Theorem \ref{thm1} that
\[\sqrt\varepsilon\lambda|F_2^1(\sqrt\varepsilon)| \le C\mathcal C_{\Omega-\Omega_\varepsilon}\|f\chi_{B(0,r_\varepsilon)}\|_{L^1(\rn)} \le C\varepsilon^{3/4},\]
which indicates that
\begin{equation}\label{eq4.5}
	|F_2^1(\sqrt\varepsilon)| \le C\frac{\varepsilon^{1/4}}\lambda.
\end{equation}
Therefore, by (\ref{eq4.4}) and (\ref{eq4.5}) it holds that
\begin{equation}\label{eq4.6}
	|F_2^2(1+\sqrt\varepsilon)| - C\frac{\varepsilon^{1/4}}\lambda \le F \le |F_2^2(1-2\sqrt\varepsilon)| + C(\co +1)\frac{\varepsilon^{1/4}}\lambda.
\end{equation}

Here we state the main idea of the proof of Theorem \ref{thm2} (i). To estimate $|F|$, we need to give an upper estimate of $|F_2^2(1-2\sqrt\varepsilon)|$ and a lower estimate of $|F_2^2(1+\sqrt\varepsilon)|$. We will split the rest of this proof into two parts. Due to the fact that $f\chi_{B(0,r_\varepsilon)}$ has compact support and $\Omega_\varepsilon$ is continuous, we are able to control $M_{\Omega_\varepsilon}(f\chi_{B(0,r_\varepsilon)})(x)$ when $x$ is far away from origin, this further gives the upper estimate and the lower estimate we want.

\vspace{0.2cm}


\noindent\textbf{Part 1: Upper estimate of $|F_2^2(1-2\sqrt\varepsilon)|$.}

Since $\Omega_\varepsilon$ is continuous on $\sn$, then it is uniformly continuous on $\sn$. Hence there exists $d_\varepsilon<2\varepsilon$, such that, if $\sigma(\theta_1,\theta_2)<d_\varepsilon$, we have
\[|\Omega_\varepsilon(\theta_1)-\Omega_\varepsilon(\theta_2)| < \varepsilon,\quad\text{for }\theta_1,\theta_2\in\sn.\]
Now let $R_\varepsilon=r_\varepsilon/\arcsin(d_\varepsilon)$, then it's easy to see that for $|x|>R_\varepsilon,|y|\le r_\varepsilon$, it holds that
\[\sigma\Big(\frac{x}{|x|},\frac{x-y}{|x-y|}\Big) < d_\varepsilon.\]
Therefore we can get
\begin{equation}\label{eq4.7}
	|\Omega_\varepsilon(x-y)-\Omega_\varepsilon(x)| = \Big|\Omega_\varepsilon\Big(\frac{x-y}{|x-y|}\Big)-\Omega_\varepsilon\Big(\frac{x}{|x|}\Big)\Big| < \varepsilon.
\end{equation}

Notice that when $|x|>R_\varepsilon$ and $y\in\text{supp }g$,
\begin{equation}\label{eq4.8}
	(1-\varepsilon)|x| \le \Big(1-\frac{d_\varepsilon}2\Big)|x| \le |x|-r_\varepsilon \le |x-y| \le |x|+r_\varepsilon \le \Big(1+\frac{d_\varepsilon}2\Big)|x| \le (1+\varepsilon)|x|,
\end{equation}
which means that the radius in the supremum of $M_{\Omega_\varepsilon}(f\chi_{B(0,r_\varepsilon)})(x)$ must be between $(1-\varepsilon)|x|$ and $(1+\varepsilon)|x|$. Henceforth, by (\ref{eq4.7}) and (\ref{eq4.8}), we can conclude the upper control of $M_{\Omega_\varepsilon}(f\chi_{B(0,r_\varepsilon)})(x)$ when $|x|>R_\varepsilon$:
\begin{equation}\label{eq4.9}
	\begin{aligned}
		M_{\Omega_\varepsilon}(f\chi_{B(0,r_\varepsilon)})(x) \le& \frac1{(1-\varepsilon)^n|x|^n}\int_{B(x,(1+\varepsilon)|x|)}\Omega_\varepsilon(x-y)f(y)\chi_{B(0,r_\varepsilon)}(y)dy \\
		\le& \frac1{(1-\varepsilon)^n|x|^n}\int_{B(0,r_\varepsilon)}(\Omega_\varepsilon(x)+\varepsilon)f(y)dy \\
		\le& \frac{\Omega_\varepsilon(x)+\varepsilon}{(1-\varepsilon)^n|x|^n} \le \frac{\Omega_\varepsilon(x)+\varepsilon}{(1-2\sqrt\varepsilon)^n|x|^n}.
	\end{aligned}
\end{equation}
Therefore, it follows from Lemma \ref{lem22} and (\ref{eq4.9}) that
\begin{align*}
	|F_2^2(1-2\sqrt\varepsilon)| &\le \Big|\Big\{|x|>R_\varepsilon:\frac{\Omega_\varepsilon(x)+\varepsilon}{(1-2\sqrt\varepsilon)^n|x|^n}>(1-2\sqrt\varepsilon)\lambda\Big\}\Big| + |\overline{B(0,R_\varepsilon)}| \\
	&= \frac{\|\Omega_\varepsilon+\varepsilon\|_{L^1(\sn)}}{n(1-2\sqrt\varepsilon)^{n+1}\lambda} + |\overline{B(0,R_\varepsilon)}|.
\end{align*}
Combining this estimate with (\ref{eq4.6}), we obtain the upper estimate for $|F|$:
\[|F| \le \frac{\|\Omega_\varepsilon+\varepsilon\|_{L^1(\sn)}}{n(1-2\sqrt\varepsilon)^{n+1}\lambda} + |\overline{B(0,R_\varepsilon)}| + C(\co + 1)\frac{\varepsilon^{1/4}}\lambda.\]
Now multiplying $\lambda$ on both sides of the above inequality and let $\lambda\to0^+$, we get
\begin{align*}
	\varlimsup_{\lambda\to0^+}\lambda|F| &\le \frac{\|\Omega + (\Omega_\varepsilon-\Omega) + \varepsilon\|_{L^1(\sn)}}{n(1-2\sqrt\varepsilon)^{n+1}} + C(\co + 1)\varepsilon^{1/4} \\
	&\le \frac{\|\Omega\|_{L^1(\sn)}}{n(1-2\sqrt\varepsilon)^{n+1}} + \frac{1+ \sigma(\sn)}{n(1-2\sqrt\varepsilon)^{n+1}}\varepsilon + C(\co + 1)\varepsilon^{1/4}.
\end{align*}
Hence, since $\varepsilon$ is arbitrary, it follows that
\begin{equation}\label{eq4.10}
	\varlimsup_{\lambda\to0^+}\lambda|F| \le \frac{\|\Omega\|_{L^1(\sn)}}n,
\end{equation}
which is the desired upper estimate.

\vspace{0.2cm}


\noindent{\bf Part 2: Lower estimate for $|F_2^2(1+\sqrt\varepsilon)|$.}

By (\ref{eq4.7}), we have for $|x|>R_\varepsilon$, $|y|\le r_\varepsilon$, it holds that
\[\Omega_\varepsilon(x)-\varepsilon \le \Omega_\varepsilon(x-y).\]
However, if $\Omega_\varepsilon(x)<\varepsilon$, we can not give the lower control of $M_{\Omega_\varepsilon}(f\chi_{B(0,r_\varepsilon)})$ since $\Omega_\varepsilon(x)-\varepsilon$ is negative. To overcome this obstacle, we introduce the following two auxiliary sets:
\begin{align*}
	S_\varepsilon :=& \{\theta\in\sn:\Omega_\varepsilon(\theta)>\varepsilon\}, \\
	V_\varepsilon :=& \big\{x\in\rn:\frac{x}{|x|}\in S_\varepsilon\big\}.
\end{align*}
Therefore for $x\in V_\varepsilon\cap\overline{B(0,R_\varepsilon)}^c$ and $|y|\le r_\varepsilon$, by (\ref{eq4.7}) and (\ref{eq4.8}), we can obtain the lower control of $M_{\Omega_\varepsilon}(f\chi_{B(0,r_\varepsilon)})(x)$ as follows:
\begin{equation}\label{eq4.11}
	\begin{aligned}
		M_{\Omega_\varepsilon}(f\chi_{B(0,r_\varepsilon)})(x) \ge& \frac1{(1+\varepsilon)^n|x|^n}\int_{B(x,(1+\varepsilon)|x|)}\Omega_\varepsilon(x-y)f(y)\chi_{B(0,r_\varepsilon)}(y)dy \\
		\ge& \frac1{(1+\varepsilon)^n|x|^n}\int_{B(0,r_\varepsilon)}(\Omega_\varepsilon(x)-\varepsilon)f(y)dy \\
		\ge& \frac{(1-\varepsilon)(\Omega_\varepsilon(x)-\varepsilon)}{(1+\varepsilon)^n|x|^n} \ge \frac{(1-\varepsilon)(\Omega_\varepsilon(x)-\varepsilon)}{(1+\sqrt\varepsilon)^n|x|^n}.
	\end{aligned}
\end{equation}
Lemma \ref{lem22}, together with (\ref{eq4.11}), leads to
\begin{align*}
	|F_2^2(1+\sqrt\varepsilon)| \ge& \Big|\Big\{x\in V_\varepsilon:\frac{(1-\varepsilon)(|\Omega_\varepsilon(x)|-\varepsilon)}{(1+\sqrt\varepsilon)^n|x|^n} > (1+\sqrt\varepsilon)\lambda\Big\}\Big| - |\overline{B(0,R_\varepsilon)}| \\
	=& \frac{(1-\varepsilon)\|\Omega_\varepsilon-\varepsilon\|_{L^1(S_\varepsilon)}}{n(1+\sqrt\varepsilon)^{n+1}\lambda} - |\overline{B(0,R_\varepsilon)}|,
\end{align*}
which, combining with (\ref{eq4.6}), further gives that
\[|F| \ge \frac{(1-\varepsilon)(\|\Omega_\varepsilon\|_{L^1(S_\varepsilon)} - \sigma(S_\varepsilon)\varepsilon)}{n(1+\sqrt\varepsilon)^{n+1}\lambda} - |\overline{B(0,R_\varepsilon)}| - C\co\frac{\varepsilon^{1/4}}\lambda.\]
Here we used the fact that $\|\Omega_\varepsilon-\varepsilon\|_{L^1(S_\varepsilon)} = \|\Omega_\varepsilon\|_{L^1(S_\varepsilon)} - \sigma(S_\varepsilon)\varepsilon$ since $\Omega_\varepsilon>\varepsilon$ on $S_\varepsilon$. Multiplying $\lambda$ on both sides and let $\lambda\to0^+$, we obtain
\begin{align*}
	\varliminf_{\lambda\to0^+}\lambda|F| &\ge \frac{(1-\varepsilon)(\|\Omega_\varepsilon\|_{L^1(S_\varepsilon)} - \sigma(S_\varepsilon)\varepsilon)}{n(1+\sqrt\varepsilon)^{n+1}} - C\co\varepsilon^{1/4} \\
	&= \frac{(1-\varepsilon)(\|\Omega_\varepsilon\|_{L^1(\sn)} - \|\Omega_\varepsilon\|_{L^1(\sn\backslash S_\varepsilon)} - \sigma(S_\varepsilon)\varepsilon)}{n(1+\sqrt\varepsilon)^{n+1}} - C\co\varepsilon^{1/4} \\
	&\ge \frac{(1-\varepsilon)(\|\Omega\|_{L^1(\sn)}-(1+\sigma(\sn))\varepsilon)}{n(1+\sqrt\varepsilon)^{n+1}} - C\co\varepsilon^{1/4} .
\end{align*}
It follows from the arbitrariness of $\varepsilon$ that
\begin{equation}\label{eq4.12}
	\varliminf_{\lambda\to0^+}\lambda|F| \ge \frac{\|\Omega\|_{L^1(\sn)}}n.
\end{equation}

Finally, from (\ref{eq4.10}) and (\ref{eq4.12}), it follows that
\[\lim_{\lambda\to0^+}\lambda|F|=\frac{\|\Omega\|_{L^1(\sn)}}n.\]
This completes the proof of Theorem \ref{thm2} (i).
\end{proof}


\begin{proof}[{Proof of Theorem \ref{thm2} (ii)}]
	For $\lambda,\beta>0$, we set
\begin{align*}
	G =& \Big\{x\in\rn:\Big|M_\Omega(f)(x) - \frac{\Omega(x)}{|x|^n}\Big|>\lambda\Big\}; \\
	G_1(\beta) =& \Big\{x\in\rn:\Big|\frac{\Omega(x) - \Omega_\varepsilon(x)}{|x|^n}\Big|>\beta\lambda\Big\}; \\
	G_2(\beta) =& \Big\{|x|>R_\varepsilon:\Big|M_{\Omega_\varepsilon}(f\chi_{B(0,r_\varepsilon)})(x) - \frac{\Omega_\varepsilon(x)}{|x|^n}\Big|>\beta\lambda\Big\}.
\end{align*}
Now we claim that
\begin{equation}\label{eq5.1}
	G \subset F_1(\sqrt\varepsilon) \cup F_2^1(\sqrt\varepsilon) \cup G_1(\sqrt\varepsilon) \cup G_2(1-3\sqrt\varepsilon) \cup \overline{B(0,R_\varepsilon)}.
\end{equation}

{Indeed, to show this claim is true, it suffices }to show the complementary set of right side in (\ref{eq5.1}) is contained in $G^c$. For any $x$ in the complementary set of right side, we have $|x|>R_\varepsilon$ and the following facts:
\[\Big|\frac{\Omega(x) - \Omega_\varepsilon(x)}{|x|^n}\Big| \le \sqrt\varepsilon\lambda;\]
\[\Big|M_{\Omega_\varepsilon}(f\chi_{B(0,r_\varepsilon)})(x) - \frac{\Omega_\varepsilon(x)}{|x|^n}\Big| \le (1-3\sqrt\varepsilon)\lambda;\]
\[M_\Omega(f\chi_{B(0,r_\varepsilon)^c})(x), M_{\Omega-\Omega_\varepsilon}(f\chi_{B(0,r_\varepsilon)})(x)| \le \sqrt\varepsilon\lambda.\]
From these inequalities, it's easy to deduce that
\begin{align*}
	M_\Omega(f)(x) &\le M_\Omega(f\chi_{B(0,r_\varepsilon)})(x) + \sqrt\varepsilon\lambda \le M_{\Omega_\varepsilon}(f\chi_{B(0,r_\varepsilon)})(x) + 2\sqrt\varepsilon\lambda \\
	&\le \frac{\Omega_\varepsilon(x)}{|x|^n} + (1-\sqrt\varepsilon)\lambda \le \frac{\Omega(x)}{|x|^n} + \lambda
\end{align*}
and
\begin{align*}
	M_\Omega(f)(x) &\ge M_\Omega(f\chi_{B(0,r_\varepsilon)})(x) \ge M_{\Omega_\varepsilon}(f\chi_{B(0,r_\varepsilon)})(x) - \sqrt\varepsilon\lambda \\
	&\ge \frac{\Omega_\varepsilon(x)}{|x|^n} - (1-2\sqrt\varepsilon)\lambda \ge \frac{\Omega(x)}{|x|^n} - (1-\sqrt\varepsilon)\lambda \ge \frac{\Omega(x)}{\nu_n|x|^n} - \lambda,
\end{align*}
which implies that $x\in G^c$. Therefore claim (\ref{eq5.1}) is true.

By (\ref{eq4.2}) and (\ref{eq4.5}), one immediately gets that
\begin{equation}\label{eq5.2}
	|G| \le |G_1(\sqrt\varepsilon)| + |G_2(1-3\sqrt\varepsilon)| + C(\co + 1)\frac{\varepsilon^{1/4}}\lambda + |\overline{B(0,R_\varepsilon)}|.
\end{equation}
In regard to $|G_1(\sqrt\varepsilon)|$, applying Lemma \ref{lem22}, we obtain
\[|G_1(\sqrt\varepsilon)| = \frac{\|\Omega - \Omega_\varepsilon\|_{L^1(\sn)}}{n\sqrt\varepsilon\lambda} \le \frac{\varepsilon}{n\sqrt\varepsilon\lambda} \le C\frac{\varepsilon^{1/4}}\lambda.\]
With (\ref{eq5.2}), we have
\begin{equation}\label{eq5.3}
	|G| \le |G_2(1-3\sqrt\varepsilon) \cap V_\varepsilon| + |G_2(1-3\sqrt\varepsilon) \cap V_\varepsilon^c| + C(\co + 1)\frac{\varepsilon^{1/4}}\lambda + |\overline{B(0,R_\varepsilon)}|.
\end{equation}

Now for any $x\in G_2(1-3\sqrt\varepsilon) \cap V_\varepsilon$, from (\ref{eq4.9}) and (\ref{eq4.11}), we know that $M_{\Omega_\varepsilon}(f\chi_{B(0,r_\varepsilon)})(x)$ and $\Omega_\varepsilon(x)\big/|x|^n$ are between $(1-\varepsilon)(\Omega_\varepsilon(x)-\varepsilon)\big/((1+\varepsilon)^n|x|^n)$ and $(\Omega_\varepsilon(x)+\varepsilon)\big/((1-\varepsilon)^n|x|^n)$, which implies that
\[\Big|M_{\Omega_\varepsilon}(f\chi_{B(0,r_\varepsilon)})(x) - \frac{\Omega_\varepsilon(x)|}{|x|^n}\Big| \le \frac{\Omega_\varepsilon(x)+\varepsilon}{(1-\varepsilon)^n|x|^n} - \frac{(1-\varepsilon)(\Omega_\varepsilon(x)-\varepsilon)}{(1+\varepsilon)^n|x|^n} =: \frac{I_{\Omega,\varepsilon}(x)}{|x|^n}.\]
Thus by Lemma \ref{lem22}, one may obtain
\[|G_2(1-3\sqrt\varepsilon) \cap V_\varepsilon| \le \Big|\Big\{x\in V_\varepsilon:\frac{I_{\Omega,\varepsilon}(x)}{|x|^n}>(1-3\sqrt\varepsilon)\lambda\Big\}\Big| = \frac{\|I_{\Omega,\varepsilon}\|_{L^1(S_\varepsilon)}}{n(1-3\sqrt\varepsilon)\lambda}.\]
It is easy to see that
\begin{align*}
	\|I_{\Omega,\varepsilon}\|_{L^1(S_\varepsilon)}
	&= \|\Omega_\varepsilon\|_{L^1(S_\varepsilon)}\left(\frac1{(1-\varepsilon)^n}-\frac{1-\varepsilon}{(1+\varepsilon)^n}\right) + \left(\frac1{(1-\varepsilon)^n}+\frac{1-\varepsilon}{(1+\varepsilon)^n}\right)\sigma(S_\varepsilon)\varepsilon \\
	&\le C(\|\Omega\|_{L^1(\sn)} + 1)\varepsilon \le C(\co + 1)\varepsilon^{1/4}.
\end{align*}
Therefore
\begin{equation}\label{eq5.4}
	|G_2(1-3\sqrt\varepsilon) \cap V_\varepsilon| \le C(\co + 1)\frac{\varepsilon^{1/4}}\lambda.
\end{equation}

On the other hand, if $x\in G_2(1-3\sqrt\varepsilon) \cap V_\varepsilon^c$, it follows from (\ref{eq4.9}) that
\[\Big|M_{\Omega_\varepsilon}(f\chi_{B(0,r_\varepsilon)})(x) - \frac{\Omega_\varepsilon(x)|}{|x|^n}\Big| \le 2\frac{\Omega_\varepsilon(x)+\varepsilon}{(1-\varepsilon)^n|x|^n} \le 2\frac{\Omega_\varepsilon(x)+\varepsilon}{(1-3\sqrt\varepsilon)^n|x|^n}.\]
Then by Lemma \ref{lem22}, $|G_2(1-3\sqrt\varepsilon) \cap V_\varepsilon^c|$ is dominated by
\begin{equation}\label{eq5.5}
	|G_2(1-3\sqrt\varepsilon) \cap V_\varepsilon^c| \le \frac{2\|\Omega_\varepsilon+\varepsilon\|_{L^1(\sn\backslash S_\varepsilon)}}{n(1-3\sqrt\varepsilon)^{n+1}\lambda} \le \frac{4\sigma(\sn\backslash S_\varepsilon)\varepsilon}{n(1-2\sqrt\varepsilon)^{n+1}\lambda} \le C\frac{\varepsilon^{1/4}}\lambda.
\end{equation}

Finally, by (\ref{eq5.3})-(\ref{eq5.5}), it holds that
\begin{align*}
	|G| \le C(\co + 1)\frac{\varepsilon^{1/4}}\lambda.
\end{align*}
Multiplying $\lambda$ on both sides, and by the arbitrariness of $\varepsilon$, we finally get
\[\lim_{\lambda\to0^+}\lambda|G|=0,\]
{which finishes the proof of Theorem \ref{thm2} (ii).}
\end{proof}

\bigskip

\noindent Moyan Qin

\smallskip

\noindent {\it Address:} Laboratory of Mathematics and Complex Systems (Ministry of Education of China), School of Mathematical Sciences, Beijing Normal University, Beijing 100875, People's Republic of China

\smallskip

\noindent {\it E-mail:} \texttt{myqin@mail.bnu.edu.cn}

\medskip

\noindent Huoxiong Wu

\smallskip

\noindent {\it Address:} School of Mathematical Sciences, Xiamen University, Xiamen 361005, China

\smallskip

\noindent {\it E-mail:} \texttt{huoxwu@xmu.edu.cn}

\medskip

\noindent Qingying Xue

\smallskip

\noindent {\it Address:} Laboratory of Mathematics and Complex Systems (Ministry of Education of China), School of Mathematical Sciences, Beijing Normal University, Beijing 100875, People's Republic of China

\smallskip

\noindent {\it E-mail:} \texttt{qyxue@bnu.edu.cn}

\end{document}